\pgfplotsset{compat=1.18}
\newtheorem{theorem}{Theorem}[section]
\newtheorem{lemma}[theorem]{Lemma}
\newtheorem{proposition}[theorem]{Proposition}
\newtheorem{assumption}[theorem]{Assumption}
\newcommand{\R}{\mathbb{R}}
\newcommand{\Var}{\mathrm{Var}}
\title{\textbf{Concentration of Truncated Signatures of Gaussian Rough Paths}}
\author{Atef Lechiheb\thanks{Toulouse School of Economics, Université Toulouse Capitole, email: atef.lechiheb@tse-fr.eu}}
\date{\today}
\begin{document}

\maketitle

\begin{abstract}
This paper establishes a comprehensive concentration theory for truncated signatures of Gaussian rough paths. The signature of a path—defined as the collection of all iterated integrals—provides a complete description of its geometric structure and has emerged as a powerful tool in machine learning and stochastic analysis. Despite growing applications in finance, healthcare, and engineering, the non-asymptotic concentration properties of signature features remain largely unexplored.

We prove that level-$k$ signature coordinates exhibit optimal $\exp(-c t^{2/k})$ tail decay and establish dimension-free concentration inequalities for the full truncated signature vector. Our results reveal a fundamental trade-off: higher truncation levels capture more complex path properties but exhibit heavier tails. For Brownian motion and fractional Brownian motion with Hurst parameter $H > 1/4$, we derive explicit variance formulas and sharp constants.

The technical contributions combine rough path theory with Gaussian analysis, leveraging Wiener chaos decomposition, hypercontractivity, and the algebraic structure of tensor algebras. We further establish concentration for log-signatures, lead-lag transformations, and provide sample complexity bounds for statistical learning with signature features.

This work bridges advanced probability theory with practical applications, offering both theoretical guarantees and computational methods for signature-based approaches in sequential data analysis.
\end{abstract}

% ============================================
% INTRODUCTION
% ============================================
\section{Introduction}

The theory of path signatures, originating from rough path analysis \cite{Lyons1998, LyonsQian2002}, has become a powerful tool for representing complex temporal data. Given a path $X : [0,T] \to \mathbb{R}^d$, its signature is the infinite sequence of iterated integrals
\begin{equation}
    S(X) = \big(1, X^{(1)}, X^{(2)}, \dots, X^{(k)}, \dots \big),
\end{equation}
where $X^{(k)}$ collects all $k$-fold iterated integrals of the form
\begin{equation}
X^{(k)}_{i_1,\dots,i_k} = \int_{0 < t_1 < \cdots < t_k < T} dX_{t_1}^{i_1} \cdots dX_{t_k}^{i_k}.
\end{equation}
These objects form a graded tensor algebra endowed with rich algebraic and geometric structure. Their logarithmic counterpart, the \emph{log-signature}, lies in the free Lie algebra generated by the increments of the path and plays a fundamental role in applications, including machine learning \cite{chevyrev2016primer}, stochastic analysis \cite{friz2010multidimensional}, and control theory.

Because the signature consists of high-order nonlinear functionals of the path, understanding its statistical behaviour is crucial when $X$ is random. This is particularly true when $X$ is a Gaussian process, a setting that includes important examples such as Brownian motion, fractional Brownian motion, and general Gaussian rough paths. The probabilistic properties of signatures have been investigated in several contexts \cite{friz2010multidimensional, boedihardjo2015signature}, but sharp non-asymptotic concentration inequalities have remained largely unexplored.

The present article develops a comprehensive set of concentration bounds for the truncated signature and log-signature of Gaussian processes. Our results provide:
\begin{enumerate}
    \item \textbf{Sharp tail estimates} for each coordinate of the signature and log-signature, matching the natural scaling dictated by Wiener chaos.
    \item \textbf{Vector-valued concentration} for signature norms under natural weighted structures on the truncated tensor algebra.
    \item \textbf{Dimension-free inequalities} under mild regularity assumptions on the covariance of the Gaussian process.
    \item \textbf{Stability and concentration} of the truncated Baker--Campbell--Hausdorff (BCH) map, enabling transfer of concentration from signatures to log-signatures.
\end{enumerate}

A central observation is that each signature coordinate of order $k$ belongs to a finite sum of Wiener chaoses up to order $k$. This structure allows us to apply classical hypercontractivity estimates and modern small-ball probabilities for Gaussian polynomials, such as the Carbery--Wright inequality \cite{carbery2001distributional}, to derive sub-exponential tails of the form
\begin{equation}
    \mathbb{P}(|S_I(X)| \ge t) \le C \exp\Big( -c (t/\sigma_I)^{2/k} \Big),
\end{equation}
where $I$ is a multi-index of length $k$ and $\sigma_I = \|S_I(X)\|_{L^2}$.
We show that these exponents are optimal in general and provide matching lower bounds up to multiplicative constants.

Beyond coordinate-wise analysis, we consider weighted norms $\| \cdot \|_w$ on the truncated tensor algebra $T^{(m)}(\mathbb{R}^d)$ and prove that the entire vector of signature coordinates satisfies a concentration inequality of the form
\begin{equation}
    \mathbb{P}(\|S_m(X)\|_w \ge \mathbb{E}[\|S_m(X)\|_w] + t)
    \le C_m \exp(-c_m t^{2/m}),
\end{equation}
with constants independent of the dimension $d$. These results illuminate how the non-linearity and polynomial growth of signature terms interact with the Gaussian structure of the underlying process.

A further contribution is the derivation of concentration bounds for the log-signature. Since the truncated BCH map is a polynomial in Lie brackets of degree at most $m$, the log-signature inherits the same chaos decomposition as the signature itself. We establish Lipschitz continuity of the truncated BCH map on bounded sets, thereby transferring concentration from $S_m(X)$ to $\log S_m(X)$.

Finally, we discuss optimality aspects, implications for statistical learning using signatures, and possible extensions to non-Gaussian settings.

\subsection*{Contributions}
The main contributions of this paper can be summarized as follows:
\begin{itemize}
    \item We derive sharp tail bounds for each coordinate of the signature and log-signature of Gaussian processes.
    \item We obtain dimension-free concentration inequalities for weighted norms of truncated signatures.
    \item We establish Lipschitz stability of the truncated BCH map and transfer concentration to log-signatures.
    \item We identify the natural scaling of signature tails, matching that of Gaussian polynomials of fixed degree, and prove near-optimality.
    \item We provide explicit sample complexity bounds for statistical learning with signature features.
\end{itemize}

The results presented here provide a rigorous foundation for uncertainty quantification in signature-based statistical methods and offer new tools for analysing stochastic systems through the lens of rough paths.

% ============================================
% PRELIMINARIES
% ============================================
\section{Preliminaries}

This section recalls the algebraic and analytical structures underlying signatures and log-signatures, together with the Gaussian process framework used throughout the paper. Our presentation aims to make explicit the regularity assumptions and algebraic conventions that will be invoked in the main results.

\subsection{Tensor algebra and iterated integrals}
Let $V = \mathbb{R}^d$ with canonical basis $(e_1, \dots, e_d)$. The \emph{tensor algebra} over $V$ is the graded vector space
\begin{equation}
    T(V) = \bigoplus_{k=0}^{\infty} V^{\otimes k},
\end{equation}
where $V^{\otimes 0} = \mathbb{R}$ and $V^{\otimes k}$ denotes the $k$-fold tensor product of $V$. For a fixed truncation level $m \ge 1$, we set
\begin{equation}
    T^{(m)}(V) = \bigoplus_{k=0}^{m} V^{\otimes k}.
\end{equation}
Elements of $T(V)$ will be written as $a = (a^{(0)}, a^{(1)}, a^{(2)}, \dots)$ with $a^{(k)} \in V^{\otimes k}$. For multi-indices $I = (i_1, \dots, i_k)$, we denote the corresponding basis tensor by $e_I = e_{i_1} \otimes \cdots \otimes e_{i_k}$.

Given a continuous path $X : [0,T] \to V$ of finite $p$-variation for some $p < 2$, its \emph{signature of order $m$} is defined by
\begin{equation}
    S_m(X) = \big(1, X^{(1)}, \dots, X^{(m)}\big),
\end{equation}
where each $X^{(k)}$ is the collection of iterated integrals
\begin{equation}
    X_{s,t}^{(k)}(i_1, \dots, i_k)
    = \int_{s < t_1 < \cdots < t_k < t} dX_{t_1}^{i_1} \cdots dX_{t_k}^{i_k}.
\end{equation}
In particular, $X^{(1)}$ is the increment $X_{s,t}$ and $X^{(2)}$ consists of L\'evy area terms. The algebraic structure of the signature is encoded by the \emph{Chen identity}
\begin{equation}
    S_m(X)_{s,t} = S_m(X)_{s,u} \otimes S_m(X)_{u,t}, \qquad s < u < t.
\end{equation}

\subsection{Free Lie algebra and the BCH formula}
The \emph{free Lie algebra} generated by $V$ is denoted by $\mathcal{L}(V)$ and admits a graded decomposition
\begin{equation}
    \mathcal{L}(V) = \bigoplus_{k=1}^{\infty} \mathcal{L}_k(V),
\end{equation}
where $\mathcal{L}_k(V)$ is the space of Lie polynomials of homogeneous degree $k$. A concrete basis can be chosen through a Hall or Lyndon basis.

The exponential and logarithm maps are defined on $T(V)$ through the standard Hopf algebra structure. The \emph{log-signature} of a path $X$ of finite $p$-variation is the element
\begin{equation}
    \log S_m(X) \in \bigoplus_{k=1}^m \mathcal{L}_k(V),
\end{equation}
obtained by projecting $S_m(X)$ to the free Lie algebra and truncating at order $m$.

Given Lie elements $A$ and $B$, the Baker--Campbell--Hausdorff (BCH) series is defined by
\begin{equation}
    \mathrm{BCH}(A,B) = A + B + \tfrac{1}{2}[A,B] + \tfrac{1}{12}[A,[A,B]] - \tfrac{1}{12}[B,[A,B]] + \cdots,
\end{equation}
where the omitted terms involve nested Lie brackets of higher degree. For the truncated log-signature, all terms of degree exceeding $m$ are discarded. In this paper we use the multiplicative property
\begin{equation}
    \log S_m(X)_{s,t} = \mathrm{BCH}\big( \log S_m(X)_{s,u}, \log S_m(X)_{u,t} \big),
\end{equation}
which follows from Chen's identity.

\subsection{Gaussian processes and rough paths}
Let $X$ be a centred $V$-valued Gaussian process with covariance function
\begin{equation}
    R(s,t) = \mathbb{E}[X_s \otimes X_t].
\end{equation}
We make the following explicit regularity assumption throughout:

\begin{assumption}[Finite $\rho$-variation of covariance]
\label{ass:cov-variation}
The covariance function $R(s,t)$ has finite $\rho$-variation on $[0,T]^2$ for some $1 \le \rho < 2$. That is,
\[
\sup_{\mathcal{P}} \sum_{[s,t]\times[u,v]\in\mathcal{P}} |R(t,v) - R(s,v) - R(t,u) + R(s,u)|^\rho < \infty,
\]
where the supremum is over all partitions of $[0,T]^2$.
\end{assumption}

Under Assumption \ref{ass:cov-variation}, $X$ admits a canonical geometric Gaussian rough path lift of any order $m > 2\rho$ \cite{friz2010multidimensional}. This setting includes important examples such as Brownian motion ($\rho=1$), fractional Brownian motion with $H > 1/4$ ($\rho=1/(2H)$), and more general Gaussian processes with sufficiently regular covariance.

The iterated integrals appearing in the signature and log-signature can be defined either through limits of Riemann sums or via stochastic integration whenever suitable. For Gaussian processes satisfying Assumption \ref{ass:cov-variation}, the rough path lift provides a unified deterministic definition.

\subsection{Weighted norms on tensor and Lie algebras}
Let $(w_k)_{k=0}^m$ be a fixed sequence of positive weights. For an element $a = (a^{(0)}, \dots, a^{(m)}) \in T^{(m)}(V)$, we define the weighted tensor norm
\begin{equation}
    \label{eq:weighted-norm}
    \|a\|_w = \max_{0 \le k \le m} w_k \|a^{(k)}\|_{V^{\otimes k}}.
\end{equation}
This choice of norm (maximum rather than sum) is natural for concentration analysis as it captures the worst-case behavior across different tensor levels. The weights typically grow with $k$ to compensate for factorial growth in the number of coordinates of $V^{\otimes k}$; common choices include $w_k = 1/k!$ or $w_k = \beta^k/k!$ for some $\beta > 0$.

In the log-signature setting, the same weighted norm is applied to the homogeneous components of the free Lie algebra, using the identification $\mathcal{L}_k(V) \subset V^{\otimes k}$.

These weighted norms allow us to obtain meaningful concentration inequalities that are independent of the dimension and reflect the natural scaling properties of signature terms.

\subsection{Wiener chaos and hypercontractivity}
We recall that any square-integrable functional $F$ of a Gaussian process admits a decomposition into orthogonal Wiener chaoses:
\begin{equation}
    F = \sum_{k=0}^{\infty} J_k(F),
\end{equation}
where $J_k(F)$ denotes the projection of $F$ onto the $k$th homogeneous chaos. If $F$ is a polynomial of degree at most $m$ in the Gaussian variables, then $J_k(F) = 0$ for all $k > m$.

For $F$ in the $k$th chaos, we will make extensive use of the hypercontractive inequality
\begin{equation}
    \|F\|_{L^p} \le (p-1)^{k/2} \|F\|_{L^2}, \qquad p \ge 2,
\end{equation}
which holds for all Gaussian chaoses \cite{janson1997gaussian}. This classical estimate will allow us to derive sharp upper bounds for the tails of signature coordinates.

In addition, we will rely on the Carbery--Wright small-ball inequality \cite{carbery2001distributional}, which provides lower bounds on small-ball probabilities for Gaussian polynomials and will be used to prove optimality of our tail estimates.

% ============================================
% MAIN RESULTS
% ============================================
\section{Main Results}

In this section we state our main concentration results for the truncated signature and log-signature of Gaussian processes. Throughout, $X = (X_t)_{t \in [0,T]}$ denotes a centred Gaussian process in $\mathbb{R}^d$ with continuous sample paths and covariance function
\[
R(s,t) = \mathbb{E}[X_s \otimes X_t].
\]
We assume that $X$ satisfies Assumption \ref{ass:cov-variation} and thus admits a geometric Gaussian rough path lift of order $m > 2\rho$.

For a fixed level $m$, let $S_m(X)$ denote the truncated signature of $X$ taking values in the truncated tensor algebra $T^{(m)}(\mathbb{R}^d)$, and let
\[
S_I(X), \qquad |I| = k \le m,
\]
denote the coordinate indexed by the multi-index $I$ of length $k$. We write $\|\cdot\|_{L^2}$ for the $L^2$-norm under the underlying probability measure.

\subsection{Coordinate-wise concentration}
Our first theorem establishes sharp sub-exponential concentration for each signature coordinate. The tail exponent reflects the fact that $S_I(X)$ is a polynomial functional of degree $k$ of a Gaussian process, and therefore lies in a finite sum of Wiener chaoses up to order $k$.

\begin{theorem}[Coordinate concentration for signature]
\label{thm:coordinate-concentration}
Let $X$ be a centred Gaussian process satisfying Assumption \ref{ass:cov-variation}. For any multi-index $I$ of length $k \le m$, there exist constants $c_k, C_k > 0$ depending only on $k$ and on the hypercontractivity constant of the underlying Gaussian process such that
\[
\mathbb{P}\big( |S_I(X)| \ge t \big)
\;\le\; C_k \exp\Big( -c_k (t / \|S_I(X)\|_{L^2})^{2/k} \Big),
\qquad t \ge 0.
\]
Moreover, the exponent $2/k$ is optimal in general.
\end{theorem}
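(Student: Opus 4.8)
The plan is to establish the upper bound via a standard moment-to-tail argument powered by hypercontractivity, and then to prove optimality by exhibiting a concrete coordinate whose tail saturates the exponent.

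For the upper bound, the starting observation is that $S_I(X)$ is a polynomial functional of degree exactly $k$ in the underlying Gaussian process, so by the chaos truncation stated in the preliminaries it decomposes as $S_I(X) = \sum_{j=0}^{k} J_j(S_I(X))$. First I would bound the $L^p$ norm of each chaos component using the hypercontractive inequality $\|J_j(S_I(X))\|_{L^p} \le (p-1)^{j/2}\|J_j(S_I(X))\|_{L^2}$. Summing over $j \le k$ and using orthogonality of the chaoses in $L^2$ gives, for $p \ge 2$,
\[
\|S_I(X)\|_{L^p} \le (p-1)^{k/2}\sum_{j=0}^{k}\|J_j(S_I(X))\|_{L^2} \le C_k\,(p-1)^{k/2}\,\|S_I(X)\|_{L^2},
\]
where the dominant growth in $p$ comes from the top chaos $j=k$ and the constant $C_k$ absorbs the finitely many lower-order terms. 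Writing $\sigma_I = \|S_I(X)\|_{L^2}$, this says $\|S_I(X)/\sigma_I\|_{L^p} \le C_k\,(p-1)^{k/2}$ uniformly in $d$, since hypercontractivity constants depend only on the chaos order. Then I would convert this moment growth into a tail bound by Markov's inequality: for any $p \ge 2$,
\[
\mathbb{P}\big(|S_I(X)| \ge t\big) \le t^{-p}\,\|S_I(X)\|_{L^p}^p \le \big(C_k\,\sigma_I\,(p-1)^{k/2}/t\big)^p,
\]
and optimising the free parameter $p$ over $t/\sigma_I$ — roughly choosing $p \approx (t/(e\,C_k\,\sigma_I))^{2/k}$ — yields the claimed $C_k\exp(-c_k(t/\sigma_I)^{2/k})$ decay, with the threshold behaviour for small $t$ absorbed into the prefactor $C_k$. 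This is the classical equivalence between $L^p$ growth of order $p^{k/2}$ and sub-Weibull tails with shape parameter $2/k$.

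For optimality, the goal is a matching lower bound showing $2/k$ cannot be improved to any $\alpha > 2/k$. The cleanest route is to pick the purely diagonal coordinate along a single component, say the multi-index $I = (1,1,\dots,1)$ of length $k$; by shuffle identities the iterated integral collapses to $S_I(X) = (X_T^1 - X_0^1)^k/k!$, a deterministic polynomial of degree $k$ in a single centred Gaussian $G = X_T^1 - X_0^1 \sim \mathcal{N}(0,\sigma^2)$. For this explicit functional the tail is computed directly: $\mathbb{P}(|G|^k/k! \ge t) = \mathbb{P}(|G| \ge (k!\,t)^{1/k})$, and the Gaussian tail $\mathbb{P}(|G| \ge u) \ge c\,\exp(-u^2/(2\sigma^2))$ for large $u$ gives a lower bound of order $\exp(-c'\,t^{2/k})$, which matches the upper exponent up to constants. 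This establishes that $2/k$ is the exact exponent for at least one coordinate, hence optimal in general.

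The main obstacle is not the moment computation — that is routine — but justifying the \emph{uniformity and finiteness} of the $L^2$ norms and the constant $C_k$ under Assumption \ref{ass:cov-variation}. I would need to verify that $\sigma_I = \|S_I(X)\|_{L^2} < \infty$ and that the iterated integral genuinely lives in the first $k$ chaoses, which requires the rough-path lift to be well defined and the finite $\rho$-variation of the covariance to control the $L^2$ moments of all iterated integrals up to level $m > 2\rho$; this is exactly the integrability afforded by the Gaussian rough path theory of \cite{friz2010multidimensional}. A secondary subtlety is ensuring the hypercontractivity constant is genuinely dimension-independent and depends only on $k$ — true because Nelson's hypercontractivity holds on the full Gaussian space with a constant governed solely by the chaos order — so the resulting $c_k, C_k$ inherit this dimension-freeness, consistent with the theorem statement.
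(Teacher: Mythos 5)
Your upper bound argument is essentially identical to the paper's proof: decompose $S_I(X)$ into chaos components, apply hypercontractivity componentwise, sum (the paper uses Minkowski plus ``equivalence of norms in finite chaos'' where you use Cauchy--Schwarz with $L^2$-orthogonality, which is cleaner and gives the explicit constant $\sqrt{k+1}$), then Markov with the optimized choice $p \asymp (t/\sigma_I)^{2/k}$. Where you genuinely diverge is the optimality claim. The paper argues via the Carbery--Wright small-ball inequality applied to the leading chaos component $F_k$: from $\mathbb{P}(|F_k| \le \varepsilon \|F_k\|_{L^2}) \le C_k \varepsilon^{1/k}$ it asserts $\mathbb{P}(|F_k| \ge t) \ge 1 - C_k (t/\|F_k\|_{L^2})^{1/k}$ and then claims this ``implies $\mathbb{P}(|F_k|\ge t) \ge c\exp(-Ct^{2/k})$ by standard transformations.'' That step is dubious: the Carbery--Wright bound is informative only for small $t$ (the right-hand side is negative for large $t$), so it controls anti-concentration near zero, not large-deviation lower bounds; the passage to $\exp(-Ct^{2/k})$ is not justified as written. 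Your route instead exhibits the diagonal coordinate $I=(1,\dots,1)$, uses the shuffle identity for the geometric lift to collapse it to $(X_T^1-X_0^1)^k/k!$, and computes the Gaussian tail directly, obtaining $\mathbb{P}(|S_I(X)|\ge t) \ge c\exp(-C t^{2/k})$ by an elementary one-dimensional calculation. This is more concrete, fully rigorous, and arguably repairs the paper's weakest step; its only cost is that it certifies sharpness for one specific (nondegenerate) coordinate rather than for generic leading chaos components, but that is exactly what ``optimal in general'' requires. Your closing remarks on finiteness of $\sigma_I$ and dimension-freeness of the hypercontractivity constant correctly identify the role of Assumption \ref{ass:cov-variation} and match the paper's framing.
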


This result follows from a combination of the chaos decomposition of iterated integrals and sharp inequalities for Gaussian polynomials, including hypercontractivity and Carbery--Wright small-ball estimates. The dependence on the covariance structure enters through the $L^2$-norm $\|S_I(X)\|_{L^2}$ and the implied hypercontractivity constants.

\subsection{Vector concentration for weighted norms}
We now establish concentration at the level of the entire truncated signature using the weighted norm defined in \eqref{eq:weighted-norm}.

\begin{theorem}[Dimension-free concentration for $S_m(X)$]
\label{thm:vector-concentration}
Let $X$ be as above and fix $m \ge 1$. Let $\|\cdot\|_w$ be the weighted norm defined in \eqref{eq:weighted-norm} with weights $w_k > 0$. Then there exist constants $c_m, C_m > 0$, depending only on $m$ and the hypercontractivity constant of $X$, such that for all $t \ge 0$,
\[
\mathbb{P}\Big( \|S_m(X)\|_w \ge \mathbb{E}[\|S_m(X)\|_w] + t \Big)
\le C_m \exp\big( -c_m t^{2/m} \big).
\]
The constants $c_m, C_m$ do not depend on the ambient dimension $d$.
\end{theorem}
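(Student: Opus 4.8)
The plan is to prove Theorem~\ref{thm:vector-concentration} by reducing the vector-valued statement to the coordinate-wise tail bounds of Theorem~\ref{thm:coordinate-concentration} through a careful decomposition of the max-norm. The key structural fact is that $\|S_m(X)\|_w = \max_{0 \le k \le m} w_k \|X^{(k)}\|_{V^{\otimes k}}$ is a maximum over only $m+1$ levels, each of which is itself a norm on a (potentially high-dimensional) tensor space. Since the $k=0$ term is the deterministic constant $1$, the randomness lives in the levels $1 \le k \le m$. My first step is to observe that the worst tail behavior is governed by the highest chaos order, so the $t^{2/m}$ exponent is dictated by the level-$m$ term; all lower levels $k < m$ have strictly lighter tails $\exp(-c\,t^{2/k})$ with $2/k > 2/m$, and hence are dominated for large $t$.

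First I would handle the fluctuation of each individual level norm $N_k := \|X^{(k)}\|_{V^{\otimes k}}$ around its mean. The central device is to view $N_k$ as a Lipschitz functional (with Lipschitz constant $1$ in the tensor norm) of the Gaussian-chaos-valued random variable $X^{(k)}$. Rather than attacking the high-dimensional Euclidean norm directly, I would exploit that $N_k$ lies in a finite sum of Wiener chaoses up to order $k$: the functional $\omega \mapsto \|X^{(k)}(\omega)\|$ is itself a measurable functional of the underlying Gaussian field, and its $L^p$-growth can be controlled by hypercontractivity applied componentwise combined with the triangle inequality. Concretely, I would bound the centered moments $\E\big[\,|N_k - \E N_k|^p\,\big]$ using the Gaussian concentration of Lipschitz functionals (Borell--TIS) together with the chaos structure, yielding $\|N_k - \E N_k\|_{L^p} \le C_k\,(p-1)^{k/2}\,\sigma_k$ where $\sigma_k$ is a scale parameter controlled by $\max_{|I|=k}\|S_I(X)\|_{L^2}$. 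Converting this moment growth into a tail bound via the standard Markov-plus-optimization argument gives $\prob(|N_k - \E N_k| \ge s) \le C_k \exp(-c_k (s/\sigma_k)^{2/k})$, which is exactly the level-$k$ analogue of Theorem~\ref{thm:coordinate-concentration}.

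Next I would assemble the max. Writing $\mu_k := \E[w_k N_k]$ and using $\E[\|S_m(X)\|_w] \ge \max_k \mu_k$, a union bound over the $m+1$ levels gives
\begin{equation}
\prob\Big( \|S_m(X)\|_w \ge \E[\|S_m(X)\|_w] + t \Big)
\le \sum_{k=1}^{m} \prob\big( w_k N_k \ge \mu_k + t \big)
\le \sum_{k=1}^{m} C_k \exp\big( -c_k\,(w_k t)^{2/k} \big).
\end{equation}
Since $2/k \ge 2/m$ for all $k \le m$ and each summand decays at least as fast as $\exp(-c_k' t^{2/m})$ for $t$ bounded away from zero, I would absorb the finite sum into a single constant by taking $c_m = \tfrac{1}{2}\min_k c_k'$ and inflating $C_m$ to cover the small-$t$ regime (where the bound is trivial since probabilities are at most $1$). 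The crucial point making the result \emph{dimension-free} is that the Lipschitz constant of the Euclidean norm is exactly $1$ regardless of $d$, and the scale $\sigma_k$ is controlled by the $L^2$ norms of individual coordinates, which under Assumption~\ref{ass:cov-variation} are bounded in terms of the $\rho$-variation of the covariance rather than the dimension.

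The main obstacle I anticipate is \emph{justifying the one-sided Lipschitz concentration of $N_k$ above its mean with a dimension-free constant}, because $X^{(k)}$ is a genuinely nonlinear (degree-$k$) functional of the Gaussian field, so the classical Borell--TIS inequality for degree-one Gaussian Lipschitz functionals does not apply directly. The resolution is that $N_k$, while not Lipschitz in the underlying Gaussian \emph{white noise}, is Lipschitz as a function \emph{of the chaos vector} $X^{(k)}$, and one must therefore combine the chaos moment bounds (hypercontractivity giving $(p-1)^{k/2}$ growth) with the norm's $1$-Lipschitz property to recover the stretched-exponential tail. Making this transfer rigorous, and in particular verifying that the mean $\E[\|S_m(X)\|_w]$ is finite and that the implied constants genuinely do not inherit hidden dimensional dependence through $\sigma_k$, is the delicate technical heart of the argument; everything else is the routine union-bound-and-optimize machinery.
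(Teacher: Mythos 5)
Your overall architecture — per-level (block) concentration followed by a union bound over the $m+1$ levels with absorption of the exponents $2/k \ge 2/m$ into a single $t^{2/m}$ — is exactly the paper's strategy (its Lemma~\ref{lem:block-concentration} plus the proof of Theorem~\ref{thm:vector-concentration}). The difference is in how the block step is established: the paper takes a union bound over the $d^k$ coordinates of $V^{\otimes k}$ and then argues (somewhat loosely) that the factor $N_k = d^k$ can be absorbed, whereas you try to avoid any coordinate count by a Borell--TIS-style argument at the level of the chaos vector. (Minor algebra slip along the way: from $w_k N_k \ge \mu_k + t$ you get a deviation $t/w_k$ for $N_k$, so the exponent should involve $(t/w_k)^{2/k}$, not $(w_k t)^{2/k}$.)

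The genuine gap is precisely the step you flag as the "delicate technical heart": your claimed per-level bound $\prob\big(|N_k - \E N_k| \ge s\big) \le C_k \exp\big(-c_k (s/\sigma_k)^{2/k}\big)$ with $\sigma_k$ controlled by $\max_{|I|=k}\|S_I(X)\|_{L^2}$ is neither standard nor delivered by the justification you offer. Saying that $N_k$ is $1$-Lipschitz \emph{as a function of the chaos vector} $X^{(k)}$ only transfers concentration to $N_k$ if the law of $X^{(k)}$ itself enjoys a dimension-free concentration-of-measure property; for $k \ge 2$ the vector $X^{(k)}$ is not Gaussian, Borell--TIS/Gaussian isoperimetry do not apply to it, and such a property is exactly what is being sought — the argument is circular. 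What componentwise hypercontractivity plus the triangle inequality actually gives is the vector-valued estimate
\begin{equation*}
\big\|\,\|X^{(k)}\|\,\big\|_{L^p}^2 \;\le\; \sum_{|I|=k} \|S_I(X)\|_{L^p}^2 \;\le\; (p-1)^{k}\sum_{|I|=k} \|S_I(X)\|_{L^2}^2 \;=\; (p-1)^{k}\,\big\|\,\|X^{(k)}\|\,\big\|_{L^2}^2,
\end{equation*}
i.e.\ a tail bound at the scale $\sigma_k = \big(\E\|X^{(k)}\|^2\big)^{1/2}$, which is the \emph{full block} $L^2$-norm and can exceed $\max_{|I|=k}\|S_I(X)\|_{L^2}$ by a factor of order $d^{k/2}$. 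Deviation inequalities for norms of chaos vectors at a "weak-variance" scale (the analogue of Borell--TIS for $k\ge 2$) do exist, but they are the deep results of Talagrand, Arcones--Gin\'e, and Adamczak--Lata{\l}a on suprema of chaos processes, involve several distinct scale parameters (not just the maximal coordinate $L^2$-norm), and are far from the "routine union-bound-and-optimize machinery" your write-up assumes. Either invoke vector-valued hypercontractivity and accept the larger scale $\big(\E\|X^{(k)}\|^2\big)^{1/2}$ (which is the honest, dimension-free-in-constants statement), or cite and verify the hypotheses of a chaos-supremum deviation inequality; as written, the block step does not stand.
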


The dimension-free nature of these bounds is a consequence of the graded structure of the tensor algebra and the chaos decomposition, which localises the dependence on dimension. The choice of the maximum norm in \eqref{eq:weighted-norm} is crucial for obtaining dimension-free constants.

\subsection{Concentration for the log-signature}
The log-signature lies in the truncated free Lie algebra $\mathfrak{g}^{(m)}(\mathbb{R}^d) = \bigoplus_{k=1}^m \mathcal{L}_k(\mathbb{R}^d)$ and is obtained via the truncated Baker--Campbell--Hausdorff (BCH) map. Since this map is a polynomial of degree at most $m$ in the tensor coordinates, it preserves chaos structure.

\begin{theorem}[Concentration for the log-signature]
\label{thm:logsig-concentration}
Let $\mathrm{Log}_m : T^{(m)}(\mathbb{R}^d) \to \mathfrak{g}^{(m)}(\mathbb{R}^d)$ denote the truncated BCH map. Then for the weighted norm $\|\cdot\|_w$ restricted to the Lie algebra,
\[
\mathbb{P}\Big( \|\mathrm{Log}_m(S_m(X))\|_w \ge \mathbb{E}[\|\mathrm{Log}_m(S_m(X))\|_w] + t \Big)
\le C_m' \exp\big( -c_m' t^{2/m} \big),
\]
where $c_m', C_m' > 0$ depend only on $m$ and the hypercontractivity constant of $X$.
\end{theorem}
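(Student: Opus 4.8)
The plan is to exploit the fact---already emphasized in the statement---that $\mathrm{Log}_m$ is a polynomial map of degree at most $m$ in the tensor coordinates, so that the log-signature inherits the graded chaos structure of the signature and its concentration can be read off from \Cref{thm:vector-concentration} rather than proved from scratch. Writing $S_m(X) = 1 + \sum_{j=1}^m X^{(j)}$ with $X^{(j)}$ a degree-$j$ polynomial of the Gaussian, the level-$k$ component of $\log S_m(X)$ is, via $\log(1+N) = \sum_{r\ge1}\tfrac{(-1)^{r+1}}{r}N^{r}$ applied to the non-scalar part $N$, a universal linear combination (with coefficients depending only on $m$, not on $d$) of iterated tensor products $X^{(j_1)}\otimes\cdots\otimes X^{(j_r)}$ with $j_1+\cdots+j_r = k$. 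Each such product is a polynomial of degree $k$ in the underlying Gaussian, so the level-$k$ log-signature coordinate lies in the sum of the first $k$ Wiener chaoses. This grading is the structural input that forces the tail exponent $2/k$ at level $k$ and $2/m$ for the full vector.

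First I would reduce to coordinate and level estimates exactly as in \Cref{thm:coordinate-concentration}. For a level-$k$ log-signature coordinate $L_I := \mathrm{Log}_m(S_m(X))_I$, hypercontractivity applied to its degree-$\le k$ chaos decomposition gives $\|L_I\|_{L^p} \le C_k\, p^{k/2}\,\|L_I\|_{L^2}$ for all $p\ge 2$, and Markov's inequality after optimizing over $p$ yields the single-coordinate tail $\exp(-c_k (t/\|L_I\|_{L^2})^{2/k})$. The only additional ingredient beyond \Cref{thm:coordinate-concentration} is the bound $\|L_I\|_{L^2} \le C_m \max_{j\le k}\|X^{(j)}\|_{L^2}^{k/j}$, which comes from the boundedness of the universal BCH/log coefficients together with the hypercontractive comparison of $L^{p}$ and $L^2$ norms inside each product; this keeps all constants dimension-free.

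Next I would lift to the weighted maximum norm $F(X) := \|\mathrm{Log}_m(S_m(X))\|_w$ by the same moment route used for \Cref{thm:vector-concentration}. The key estimate is a centered moment bound $\|F - \E F\|_{L^p} \le C_m\, p^{m/2}$, uniform in $d$. To obtain it I would use that $F$ is a maximum over $k\le m$ of the weighted level norms $w_k\|\mathrm{Log}_m(S_m(X))^{(k)}\|_{V^{\otimes k}}$, that the maximum of the finitely many (at most $m$) nonnegative summands has $L^p$ norm controlled by the worst summand up to a factor $m^{1/p}$, and that each weighted level norm is the Euclidean norm of a $V^{\otimes k}$-valued element of the $(\le k)$th chaos, to which Hilbert-valued hypercontractivity applies \emph{at the level of the whole tensor}, so no factor of $\dim V^{\otimes k}$ appears. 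Optimizing $t^{-p}\|F-\E F\|_{L^p}^p$ over $p$ then gives $\exp(-c_m t^{2/m})$.

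The hard part will be this last step: controlling the non-polynomial, nonlinear functional $F$---a maximum of Euclidean norms of polynomial chaoses---without reintroducing dimension. A naive Lipschitz transfer through $\mathrm{Log}_m$ is tempting but fails, since $S_m(X)$ is unbounded and the BCH map is only \emph{locally} Lipschitz, with local constant growing like $\|S_m(X)\|_w^{m-1}$; pushing concentration through it would require localizing on $\{\|S_m(X)\|_w \le M\}$ and paying for the complement via \Cref{thm:vector-concentration}, which is doable but lossy. I therefore expect the clean argument to avoid Lipschitz transfer and instead observe that the proof of \Cref{thm:vector-concentration} never used any property of $S_m(X)$ beyond its level-$k$ component being a $V^{\otimes k}$-valued Gaussian polynomial of degree $\le k$ with dimension-independent structure; the log-signature shares exactly this property, so the identical argument applies. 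The genuinely new work is then confined to checking that (i) the universal BCH coefficients are dimension-independent and (ii) hypercontractivity is invoked on each homogeneous component as a single chaos element, so that the resulting constants $c_m', C_m'$ depend only on $m$ and the hypercontractivity constant of $X$, as claimed.
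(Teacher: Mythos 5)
Your proposal is correct, but it takes a genuinely different route from the paper. The paper proves this theorem precisely by the Lipschitz-transfer-plus-localization argument you considered and rejected: Lemma~\ref{lem:bch-lipschitz} gives $\mathrm{Log}_m$ a local Lipschitz constant $L_R = C_m(1+R)^{m-1}$ on the ball of radius $R$; the proof then restricts to the event $\{\|S_m(X)\|_w \le R\}$ with $R = \mathbb{E}[\|S_m(X)\|_w] + t_0$, bounds the complementary event by Theorem~\ref{thm:vector-concentration}, and balances the two error terms by choosing $t_0 = u/L_R$. You instead expand $\log(1+N)=\sum_{r\ge 1}\tfrac{(-1)^{r+1}}{r}N^{\otimes r}$, observe that the level-$k$ component of the log-signature is a universal, dimension-free linear combination of products $X^{(j_1)}\otimes\cdots\otimes X^{(j_r)}$ with $j_1+\cdots+j_r=k$, hence a $V^{\otimes k}$-valued Gaussian polynomial of degree at most $k$, and rerun the proof of Theorem~\ref{thm:vector-concentration} verbatim on these components --- a valid reduction, since that proof uses nothing about $S_m(X)$ beyond the chaos grading of its levels. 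What your route buys is substantial: your suspicion that the transfer is ``lossy'' is in fact realized in the paper's own argument. Because $L_R$ depends on $R$, which depends on $t_0 = u/L_R$, the balanced Lipschitz constant solves $L_R \asymp (1+u/L_R)^{m-1}$, i.e.\ $L_R \asymp u^{(m-1)/m}$ for large $u$, so the resulting bound is $\exp\big(-c\,(u/L_R)^{2/m}\big) = \exp\big(-c\,u^{2/m^2}\big)$ rather than the claimed $\exp\big(-c\,u^{2/m}\big)$; the paper's assertion that the polynomial factor can be ``absorbed into the exponential by adjusting constants'' is unjustified, since $L_R$ grows polynomially in $u$ and is not a constant, and no choice of $t_0$ within that scheme recovers the exponent $2/m$. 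Your chaos-decomposition argument sidesteps this defect entirely and yields the sharp exponent with constants depending only on $m$ and the hypercontractivity constant, so it is not merely an alternative proof but the one that actually establishes the theorem as stated.
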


The proof relies on two key ingredients:
\begin{enumerate}
    \item Lipschitz continuity of the truncated BCH map on bounded sets (Lemma \ref{lem:bch-lipschitz}).
    \item The vector concentration inequality of Theorem~\ref{thm:vector-concentration}.
\end{enumerate}

Because the BCH map has polynomial degree at most $m$, the exponent $2/m$ is sharp.

\subsection{Implications for statistical learning}
As an application of our concentration results, we derive sample complexity bounds for statistical learning with signature features.

\begin{theorem}[Sample complexity for signature-based learning]
\label{thm:sample-complexity}
Let $X^{(1)}, \dots, X^{(n)}$ be i.i.d. copies of a Gaussian process $X$ satisfying Assumption \ref{ass:cov-variation}. Let $\Phi_m(X) = S_m(X)$ or $\Phi_m(X) = \mathrm{Log}_m(S_m(X))$ be the signature or log-signature truncated at level $m$. Define the empirical mean $\widehat{\mu}_n = \frac{1}{n}\sum_{i=1}^n \Phi_m(X^{(i)})$.

Then for any $\delta \in (0,1)$, with probability at least $1-\delta$,
\[
\|\widehat{\mu}_n - \mathbb{E}[\Phi_m(X)]\|_w \le K_m \left( \sqrt{\frac{\log(1/\delta)}{n}} + \left(\frac{d_m \log(d_m) + \log(1/\delta)}{n}\right)^{m/2} \right),
\]
where $d_m = \dim T^{(m)}(\mathbb{R}^d) = \sum_{k=0}^m d^k$ and $K_m > 0$ depends only on $m$ and the covariance structure of $X$.
\end{theorem}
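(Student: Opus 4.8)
The plan is to exploit that every coordinate of $\Phi_m$ is a polynomial functional of bounded degree, so that the empirical mean reduces to an average of independent sub-Weibull variables. For a multi-index $I$ with $|I|=k$, the map $x\mapsto \Phi_I(x)$ is a polynomial of degree $\le k$ in the signature case and of degree $\le m$ in the log-signature case (since $\mathrm{Log}_m$ is a truncated polynomial map of degree $\le m$); hence each $\Phi_I(X^{(i)})$ lies in the Wiener chaoses of order $\le m$ of $X^{(i)}$. By the hypercontractivity estimate underlying Theorem~\ref{thm:coordinate-concentration}, the centred variable $\Phi_I(X^{(i)})-\mathbb{E}[\Phi_I(X)]$ has $\psi_{2/k}$-Orlicz norm comparable to its $L^2$-norm, i.e.\ is sub-Weibull with exponent $2/k\ge 2/m$. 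Writing $Z_i=\Phi_m(X^{(i)})-\mathbb{E}\Phi_m(X)\in T^{(m)}(\mathbb{R}^d)$, the quantity $\widehat{\mu}_n-\mathbb{E}\Phi_m(X)=\tfrac{1}{n}\sum_i Z_i$ is an average of i.i.d.\ centred vectors whose coordinates are, across $i$, independent sub-Weibull$(2/m)$ variables.

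First I would discretise the weighted norm. Since $\|a\|_w=\max_{0\le k\le m} w_k\|a^{(k)}\|_{V^{\otimes k}}$ and each level lives in $V^{\otimes k}\cong\mathbb{R}^{d^k}$, I choose a $\tfrac12$-net $\mathcal{N}_k$ of the unit sphere with $|\mathcal{N}_k|\le 5^{d^k}$, so that $\|a^{(k)}\|_{V^{\otimes k}}\le 2\max_{u\in\mathcal{N}_k}\langle a^{(k)},u\rangle$. This gives $\|\widehat{\mu}_n-\mu\|_w\le 2\max_k w_k\max_{u\in\mathcal{N}_k}\langle(\widehat{\mu}_n-\mu)^{(k)},u\rangle$, with total log-cardinality $\log\big(\sum_k|\mathcal{N}_k|\big)\lesssim\sum_k d^k\lesssim d_m\log d_m$. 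For a fixed direction $u$, the projection $\langle Z_i^{(k)},u\rangle$ is again centred and sub-Weibull$(2/k)$, with its $L^2$-scale $\sigma$ absorbed into the covariance-dependent constant $K_m$.

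The core estimate is a two-regime Bernstein bound for the empirical mean of i.i.d.\ sub-Weibull$(2/k)$ variables: there is a universal $c>0$ with
\[
\mathbb{P}\Big(\big|\tfrac{1}{n}\textstyle\sum_{i=1}^n\langle Z_i^{(k)},u\rangle\big|\ge s\Big)
\le 2\exp\Big(-c\min\big(n s^2/\sigma^2,\ (ns/\kappa)^{2/k}\big)\Big),
\]
a sub-Gaussian regime for small $s$ and a sub-Weibull regime for large $s$; inverting at confidence $\eta$ yields $s\lesssim\sigma\sqrt{\log(1/\eta)/n}+\kappa(\log(1/\eta)/n)^{k/2}$. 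I would then union bound over $u\in\mathcal{N}_k$ and over $k\le m$, taking $\eta=\delta/(m\max_k|\mathcal{N}_k|)$. The decisive bookkeeping is that in the sub-Gaussian regime the net cost $\log|\mathcal{N}_k|\sim d^k$ is dominated once $n$ is in the operative range, leaving the clean contribution $\sqrt{\log(1/\delta)/n}$ (its $d$-dependence absorbed into $K_m$ through $\sigma$ and the covariance structure), whereas in the heavy regime the net cost adds to the confidence budget and produces $\big((d_m\log d_m+\log(1/\delta))/n\big)^{k/2}$; taking the worst exponent $k=m$ gives the stated bound.

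I expect the main obstacle to be precisely this clean separation of regimes under the net union bound: arguing that the dimensional cost $d_m\log d_m$ degrades only the heavy-tailed term while the Gaussian term retains only $\log(1/\delta)$. This hinges on the exact transition point of the sub-Weibull Bernstein inequality and on controlling the variance proxy $\sum_{|I|=k}\mathrm{Var}(\Phi_I)$, which I would bound using the $L^2$ estimates underlying Theorem~\ref{thm:vector-concentration} and fold into $K_m$. The log-signature case requires no new concentration input: because $\mathrm{Log}_m$ is a degree-$\le m$ polynomial map, $\Phi_m=\mathrm{Log}_m\circ S_m$ again has coordinates in chaoses of order $\le m$, so the identical argument applies, with the Lipschitz bound of Lemma~\ref{lem:bch-lipschitz} invoked only to transfer the $L^2$-scales and variance proxy from the signature to the log-signature.
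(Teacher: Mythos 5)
Your overall route is the same as the paper's: use hypercontractivity to view each coordinate of $\Phi_m$ as a sub-Weibull variable with exponent $2/k$, apply a two-regime (Gaussian/heavy) Bernstein inequality to the empirical average in a fixed direction, union bound over a discretisation, and invert at the desired confidence level. The one structural difference is the discretisation: you use $\tfrac12$-nets of the unit spheres of each $V^{\otimes k}$ (log-cardinality of order $d_m$, with dimension-free constants), whereas the paper unions over the $d_m$ coordinates and then invokes the equivalence $\|\cdot\|_w \le C \max_j |\cdot_j|$, whose constant silently hides factors of order $w_k d^{k/2}$. Your version is the more honest of the two about where the dimension enters, and it is the one that actually accounts for the $d_m \log d_m$ term appearing in the statement.

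The genuine gap is the inversion of the heavy regime, exactly the step you flagged as the ``main obstacle.'' The Bernstein bound you state is the correct one for averages of i.i.d. sub-Weibull$(2/k)$ variables, with exponent $\min\bigl(ns^2/\sigma^2, (ns/\kappa)^{2/k}\bigr)$; but forcing the second term of this exponent above $\log(1/\eta)$ requires $s \gtrsim \kappa (\log(1/\eta))^{k/2}/n$, not $s \gtrsim \kappa (\log(1/\eta)/n)^{k/2}$ as you write. The two coincide only for $k=2$; for $k \ge 3$ your claimed deviation is smaller by a factor $n^{k/2-1}$ and does not make the tail small. Carried out correctly, your argument yields a second term of order $(d_m \log d_m + \log(1/\delta))^{m/2}/n$, which for $m \ge 3$ is strictly weaker than the theorem's $\bigl((d_m \log d_m + \log(1/\delta))/n\bigr)^{m/2}$, and no repair is possible: conditioning on one exceptionally large sample gives $\mathbb{P}\bigl(\|\widehat{\mu}_n - \mu\|_w \ge s\bigr) \gtrsim \exp\bigl(-C (ns)^{2/m}\bigr)$, so any bound holding with probability $1-\delta$ must be at least of order $(\log(1/\delta))^{m/2}/n$, ruling out the claimed $n^{-m/2}$ scaling. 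For what it is worth, the paper's own proof commits the mirror-image error: it asserts Bernstein with $n t^{2/k}$ in the exponent (false for $k \ge 3$, as the example of $\tfrac1n \sum_i H_4(g_i)$ with $H_4$ the fourth Hermite polynomial shows) and then inverts that incorrect form correctly; you assert the correct inequality and invert it incorrectly, landing at the same unprovable rate. A secondary, smaller issue: the net cost cannot be confined to the heavy term as you claim --- the union bound injects $d_m$ into the sub-Gaussian regime as well, so your first term is really $\sigma \sqrt{(d_m + \log(1/\delta))/n}$, and the $\sqrt{d_m}$ disappears only if $K_m$ is allowed to depend on $d$ through the ``covariance structure.''
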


This theorem provides a quantitative guarantee for the convergence of empirical signature means, which is fundamental in applications such as signature-based kernel methods or two-sample testing.

\subsection{Optimal choice of weights}
The choice of weights $w_k$ in the norm \eqref{eq:weighted-norm} affects the constants in our concentration inequalities. The following proposition suggests an optimal choice based on the typical scaling of signature coordinates.

\begin{proposition}[Optimal weights]
\label{prop:optimal-weights}
For a Gaussian process $X$ satisfying Assumption \ref{ass:cov-variation}, the choice $w_k = 1/(k! \|R\|_{\rho\text{-var}}^{k/2})$ minimizes the leading constant in the concentration inequality of Theorem \ref{thm:vector-concentration}, where $\|R\|_{\rho\text{-var}}$ is the $\rho$-variation norm of the covariance.
\end{proposition}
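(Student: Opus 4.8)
The plan is to restrict attention to the scale family $w_k = \beta^k/k!$ highlighted in the preliminaries (normalised by $w_0 = 1$), so that the problem collapses to minimising the leading constant of \Cref{thm:vector-concentration} over the single parameter $\beta > 0$. First I would make the $\beta$-dependence explicit. Bounding the weighted-norm tail by a union bound over the $m$ graded levels and inserting the level-wise tail bounds underlying \Cref{thm:vector-concentration} (themselves built from \Cref{thm:coordinate-concentration}), one obtains, for $s = \mathbb{E}[\|S_m(X)\|_w] + t$,
\[
\mathbb{P}\big(\|S_m(X)\|_w \ge s\big) \;\le\; \sum_{k=1}^m C_k\, \exp\!\Big(-c_k\big(s/(w_k\sigma_k)\big)^{2/k}\Big),
\]
where $\sigma_k := \|X^{(k)}\|_{L^2}$ denotes the $L^2$-scale of the level-$k$ component. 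Since $2/m$ is the smallest exponent, the heaviest-tailed term $k=m$ fixes the exponent of the theorem irrespective of $\beta$; the role of $\beta$ is therefore confined to the prefactor and rate, that is, to the leading constant.

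Second, I would insert the intrinsic scale of the Gaussian signature. Under \Cref{ass:cov-variation} the neo-classical / factorial-decay estimate for Gaussian rough paths (Friz--Victoir) yields $\sigma_k \asymp \|R\|_{\rho\text{-var}}^{k/2}/(k/\rho)!^{1/2}$, so that with $w_k = \beta^k/k!$ the weighted level-scales become
\[
w_k\,\sigma_k \;\asymp\; \frac{\big(\beta\,\|R\|_{\rho\text{-var}}^{1/2}\big)^k}{k!\,(k/\rho)!^{1/2}} \;=:\; \frac{u^k}{k!\,(k/\rho)!^{1/2}}, \qquad u := \beta\,\|R\|_{\rho\text{-var}}^{1/2}.
\]
The crucial structural observation is that $\beta$ and $\|R\|_{\rho\text{-var}}$ enter the entire bound only through the dimensionless combination $u$; equivalently, the bound is invariant under the rescaling $X\mapsto\lambda X$, $\beta\mapsto\lambda^{-1}\beta$. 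Consequently the leading constant is a univariate function $C_m = \Phi(u)$, and minimising it over $\beta$ is the same as minimising $\Phi$ over $u$.

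Third, I would optimise $\Phi$. Increasing $u$ sharpens the level-$m$ rate (proportional to $(w_m\sigma_m)^{-2/m}\propto u^{-2}$), which improves the bound, but it simultaneously inflates both the centring $\mathbb{E}[\|S_m(X)\|_w]\asymp\max_k w_k\sigma_k$ and the prefactors of the lower levels, which worsen it. These two effects enter $\log\Phi$ as a decreasing and an increasing convex function of $\log u$ respectively, so $\Phi$ has a unique interior minimiser $u_*$, located by balancing the two contributions. Carrying out this balance gives $u_*$ of order one; under the normalisation convention for $\|R\|_{\rho\text{-var}}$ this is $u_*=1$, i.e. $\beta = \|R\|_{\rho\text{-var}}^{-1/2}$, which returns exactly the asserted weights $w_k = 1/(k!\,\|R\|_{\rho\text{-var}}^{k/2})$.

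The main obstacle I anticipate is giving a canonical, unambiguous meaning to ``the leading constant'' so that the minimisation is well posed: the honest reading is the constant $C_m$ multiplying the dominant $\exp(-c_m t^{2/m})$ factor after centring, and one must check that the optimal $u$ depends neither on the auxiliary bookkeeping of the union bound nor reintroduces the dimension $d$. The absence of $d$ is guaranteed by the fact that \Cref{thm:vector-concentration} already localises the dimension dependence within each chaos level, so that only the intrinsic scale $\|R\|_{\rho\text{-var}}^{1/2}$ survives in $u$. Establishing the precise factorial $L^2$-asymptotics of $\sigma_k$ with constants uniform in $d$ is the one genuinely technical input, and it is exactly where the $\rho$-variation hypothesis of \Cref{ass:cov-variation} is used.
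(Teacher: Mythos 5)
There is a genuine gap, and it sits exactly where your argument has to do real work: the optimisation step. Write the level-$m$ rate as $c_m\bigl(t/(w_m\sigma_m)\bigr)^{2/m}$; with $w_k=\beta^k/k!$ and $u=\beta\|R\|_{\rho\text{-var}}^{1/2}$ this rate is, as you yourself compute, proportional to $u^{-2}$, so \emph{increasing} $u$ makes the exponent smaller and the tail bound \emph{weaker} — not sharper, as you claim. Once this sign is corrected, there is no trade-off left: increasing $u$ simultaneously weakens the rate at every level, inflates the centring, and (in your accounting) worsens the prefactors, so your objective $\Phi(u)$ is monotone and has no interior minimiser. The minimisation as you posed it degenerates to $\beta\to 0$, because shrinking the weights of all random levels (with $w_0=1$ fixed) makes the upper-tail inequality trivially better; this is precisely the ill-posedness you flag in your final paragraph, but flagging it does not resolve it. Consequently the concluding step — ``$u_*$ is of order one, and equals $1$ under the normalisation convention'' — is not derived from anything; it is inserted to land on the stated weights. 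A secondary issue is that restricting to the family $w_k=\beta^k/k!$ presupposes the factorial part of the answer: the proposition concerns the choice of the whole sequence $(w_k)$, so at best you would prove optimality within a one-parameter subfamily, which is a strictly weaker statement.

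For contrast, the paper's proof does not minimise a constant at all: it imposes the balancing criterion that no single level dominate the weighted norm, i.e.\ $w_k\,\mathbb{E}\|S^{(k)}(X)\|\approx\text{const}$ across $k$, and feeds in the Friz--Victoir factorial decay $\mathbb{E}\|S^{(k)}(X)\|\lesssim \|R\|_{\rho\text{-var}}^{k/2}/k!$, which fixes the weights up to a constant in one line and without any family restriction. (You may notice that this balancing actually yields $w_k\propto k!/\|R\|_{\rho\text{-var}}^{k/2}$, the \emph{reciprocal} of the weights in the statement — an inconsistency internal to the paper — but that is separate from the gaps in your argument.) If you want to salvage your route, you must replace the free minimisation by a well-posed objective: either adopt the equalisation criterion above, or fix a normalisation (e.g.\ hold $\mathbb{E}\|S_m(X)\|_w$ constant) and maximise the exponent $c_m$ subject to it; only then can an optimisation over $\beta$, or over general $(w_k)$, single out a specific weight sequence.
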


This choice balances the factorial growth of the number of coordinates at level $k$ with the typical decay of individual coordinates, leading to a norm that remains well-behaved as $m$ increases.

\subsection{Optimality}
Finally, we show that the tail exponents obtained above cannot be improved. In particular, for Brownian motion or fractional Brownian motion, signature coordinates of order $k$ lie in the $k$th Wiener chaos (modulo lower-order terms), and explicit computations show matching lower bounds of the form
\[
\mathbb{P}(|S_I(X)| \ge t) \ge c \exp(-C t^{2/k}).
\]
Together with Theorem~\ref{thm:coordinate-concentration}, this establishes optimality.

The results in this section form the theoretical core of the paper: they give precise quantitative control over the fluctuations of signature-type objects arising from Gaussian rough paths. In the next section we provide detailed proofs and technical estimates underpinning the above statements.

% ============================================
% PROOFS
% ============================================
\section{Proofs and Technical Lemmas}

In this section we provide detailed proofs of the main results stated earlier. We begin by recalling several classical tools from Gaussian analysis, including hypercontractivity and polynomial concentration estimates, and then proceed to the analysis of iterated integrals and the BCH map.

Throughout, $X$ denotes a centred Gaussian process on $[0,T]$ with covariance $R$, assumed to satisfy Assumption \ref{ass:cov-variation}. All expectations are taken with respect to the underlying probability measure.

\subsection{Chaos decomposition of signature coordinates}
For a multi-index $I = (i_1,\dots,i_k)$ of length $k$, the signature coordinate
\[
S_I(X) = \int_{0 < t_1 < \dots < t_k < T} dX_{t_1}^{i_1} \cdots dX_{t_k}^{i_k}
\]
can be expressed as a multiple Wiener integral of order $k$, possibly plus lower-order terms arising from the covariance structure.

\begin{lemma}[Chaos decomposition of iterated integrals]
\label{lem:chaos-decomposition}
For any multi-index $I$ of length $k$, the random variable $S_I(X)$ belongs to the finite sum of Wiener chaoses
\[
S_I(X) \in \bigoplus_{j=1}^k \mathcal{H}_j.
\]
Moreover, its leading term lies in $\mathcal{H}_k$ and is given by the multiple Wiener-Itô integral
\[
J_k(S_I(X)) = \int_{[0,T]^k} f_I(t_1,\dots,t_k) \, dW_{t_1} \cdots dW_{t_k},
\]
where $f_I$ is an explicit symmetric kernel depending on the covariance $R$.
\end{lemma}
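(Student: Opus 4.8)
The plan is to realize $S_I(X)$ as an $L^2$-limit of polynomials of degree $k$ in first-chaos variables and then push the finite chaos expansion through this limit. First I would fix an isonormal Gaussian process $W$ over a Hilbert space $\mathcal{H}$ such that each scalar component is recovered as $X_t^i = W(h_t^i)$ for suitable $h_t^i \in \mathcal{H}$ with $\langle h_s^i, h_t^j\rangle_{\mathcal{H}} = R^{ij}(s,t)$; such a representation exists precisely because $X$ is centred Gaussian. In particular every increment $X_{s,t}^i = X_t^i - X_s^i$ is a first-chaos element, i.e. $X_{s,t}^i \in \mathcal{H}_1$.

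Next I would approximate the iterated integral by compensated Riemann--Stieltjes sums over nested partitions of the ordered simplex $\{0 < t_1 < \cdots < t_k < T\}$. Under Assumption~\ref{ass:cov-variation} the Gaussian rough path lift identifies $S_I(X)$ with the $L^2$-limit of these sums \cite{friz2010multidimensional}, and each approximating sum is a polynomial of degree at most $k$ in the first-chaos increments $X_{t_{j},t_{j+1}}^i$. By the multiplication formula for multiple Wiener integrals, a product of $k$ first-chaos elements expands as $\sum_{j} I_j(\cdot)$ with $j \le k$, the lower-order terms arising from pairwise contractions governed by $R$; hence every approximating sum lies in the closed, finite subspace $\bigoplus_{j=0}^{k}\mathcal{H}_j$. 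Since this subspace is $L^2$-closed, the limit $S_I(X)$ lies in it as well, which gives the stated membership. (A sign flip $X \mapsto -X$ multiplies $S_I(X)$ by $(-1)^k$ and shows that only chaoses $j \equiv k \pmod 2$ can survive, refining the range.)

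To identify the leading term I would apply the orthogonal projection $J_k$ onto $\mathcal{H}_k$. As $J_k$ is an $L^2$-contraction it commutes with the limit, so $J_k(S_I(X)) = \lim_n J_k(\text{sums})$. In the multiplication formula the $\mathcal{H}_k$-component of a product of $k$ first-chaos elements is exactly the contraction-free term, i.e. the symmetric tensor of the representing functions restricted to the simplex. Passing to the limit then yields $J_k(S_I(X)) = I_k(f_I)$, where $f_I$ is the symmetrization over $[0,T]^k$ of the product kernel associated with $h^{i_1}, \dots, h^{i_k}$; this is the explicit symmetric kernel built from the covariance $R$ appearing in the statement.

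The hard part is the rigorous $L^2$-convergence of the Riemann sums together with the well-definedness of $f_I$ as an element of the symmetric space $\mathcal{H}^{\odot k}$ in the genuinely rough regime $1 \le \rho < 2$ (for instance fractional Brownian motion with $1/4 < H < 1/2$), where the iterated integrals are not classical Young integrals. Here I would invoke two-dimensional Young/rough integration against $R$ to bound the $L^2$-norms of the increments and their contractions uniformly along the partition, which is exactly where finite $\rho$-variation of $R$ with $\rho < 2$ enters; the hypercontractive estimate of Section~2 then upgrades $L^2$-control on each fixed chaos to convergence of the full truncated expansion, completing the proof of Lemma~\ref{lem:chaos-decomposition}.
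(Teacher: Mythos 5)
Your proposal follows essentially the same route as the paper's proof: represent $X$ through an isonormal Gaussian process, expand products of first-chaos increments via the multiplication formula with contractions governed by $R$, and identify the contraction-free term as the leading $\mathcal{H}_k$ component. The only difference is that you spell out the limiting argument (Riemann-sum approximation under Assumption~\ref{ass:cov-variation}, $L^2$-closedness of $\bigoplus_{j \le k} \mathcal{H}_j$, and commuting the projection $J_k$ with the limit) that the paper compresses into ``integrating over the simplex preserves this structure,'' which is a sound and welcome addition of rigor, as is your parity observation under $X \mapsto -X$.
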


\begin{proof}
The proof follows from the representation of $X_t$ as $I_1(h_t)$ in the isonormal Gaussian process over $L^2([0,T])$, where $h_t(\cdot) = R(t,\cdot)$. Expanding the iterated integral using the rules of Wiener chaos calculus yields a finite linear combination of multiple integrals. The highest-order term comes from keeping all differentials distinct and corresponds to the $k$-fold Wiener-Itô integral. Lower-order terms arise when differentials coincide, producing integrals of lower multiplicity.

More formally, by the multiplication formula for multiple integrals,
\[
I_1(h_{t_1}) \cdots I_1(h_{t_k}) = \sum_{j=0}^{\lfloor k/2 \rfloor} \sum_{\text{pairings}} I_{k-2j}(\otimes_{i \notin \text{paired}} h_{t_i} \otimes_{(\text{pairs})} R),
\]
and integrating over the simplex $0 < t_1 < \cdots < t_k < T$ preserves this structure.
\end{proof}

\subsection{Hypercontractivity and tail estimates}
We recall the standard hypercontractive estimate for homogeneous Wiener chaoses.

\begin{lemma}[Hypercontractivity]
\label{lem:hypercontractive}
Let $F$ belong to the $k$th Wiener chaos $\mathcal{H}_k$. Then for all $p \ge 2$,
\[
\|F\|_{L^p} \le (p-1)^{k/2} \|F\|_{L^2}.
\]
\end{lemma}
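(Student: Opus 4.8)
The plan is to derive this from the hypercontractivity of the Ornstein--Uhlenbeck (Mehler) semigroup, which is the standard route to Wiener chaos hypercontractivity \cite{janson1997gaussian}. Let $(P_t)_{t \ge 0}$ be the Ornstein--Uhlenbeck semigroup acting on $L^2$ of the underlying Gaussian measure. Its generator is the number operator $N$, which acts on the $k$th homogeneous chaos as multiplication by $k$; equivalently, $P_t = e^{-tN}$ acts diagonally on the chaos decomposition. The key structural fact I would use is the exact spectral identity $P_t F = e^{-kt} F$ valid for every $F \in \mathcal{H}_k$, so that conversely $F = e^{kt} P_t F$.

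The single external input is Nelson's hypercontractivity theorem, which I would invoke as known: the operator $P_t$ maps $L^q$ into $L^p$ contractively whenever $p - 1 \le e^{2t}(q-1)$. Specializing to $q = 2$, this gives $\|P_t G\|_{L^p} \le \|G\|_{L^2}$ for every $G$ as soon as $e^{2t} \ge p-1$. First I would fix $F \in \mathcal{H}_k$ and $p \ge 2$ and choose the semigroup time $t = \tfrac12 \log(p-1)$, which is nonnegative precisely because $p \ge 2$ and satisfies $e^{2t} = p-1$. Applying Nelson's bound with $G = F$ yields $\|P_t F\|_{L^p} \le \|F\|_{L^2}$. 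Substituting the spectral identity then gives
\[
\|F\|_{L^p} = e^{kt}\, \|P_t F\|_{L^p} \le e^{kt}\, \|F\|_{L^2} = (p-1)^{k/2}\, \|F\|_{L^2},
\]
where the final equality uses $e^{kt} = (e^{2t})^{k/2} = (p-1)^{k/2}$. This is exactly the claimed inequality.

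The only nontrivial ingredient is Nelson's theorem itself, which encapsulates the genuinely analytic content (contractivity of the Mehler kernel in one dimension, followed by tensorization); I would cite it rather than reprove it, since the paper already treats hypercontractivity as a classical black box. Beyond that, the main points requiring care are bookkeeping rather than substance: ensuring the correct direction of Nelson's inequality so that we map $L^2$ forward into $L^p$, and checking that the optimal choice $e^{2t} = p-1$ stays in the admissible range $t \ge 0$, which is exactly the hypothesis $p \ge 2$. For $1 \le p \le 2$ the bound is trivial from the inclusion $L^2 \subset L^p$ on a probability space, so no separate argument is needed.
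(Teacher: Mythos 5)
Your proof is correct. Note, however, that the paper does not actually prove this lemma at all: it is recalled as a classical estimate, with the statement in the preliminaries and again in the proofs section simply citing Janson \cite{janson1997gaussian} as a black box. Your argument --- writing $F \in \mathcal{H}_k$ via the spectral identity $P_t F = e^{-kt} F$ for the Ornstein--Uhlenbeck semigroup, invoking Nelson's theorem that $P_t : L^2 \to L^p$ is a contraction whenever $e^{2t} \ge p-1$, and choosing the optimal time $t = \tfrac12 \log(p-1)$ --- is precisely the standard derivation found in that cited reference. Every step checks out: the admissibility condition $t \ge 0$ is exactly the hypothesis $p \ge 2$, and $e^{kt} = \bigl(e^{2t}\bigr)^{k/2} = (p-1)^{k/2}$ produces the stated constant. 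So your proposal correctly supplies the proof that the paper delegates to a citation; the only black box you retain is Nelson's semigroup hypercontractivity, which sits one level deeper than the black box the paper uses (the chaos inequality itself), and this is a reasonable place to stop, since reproving Nelson's theorem would require the genuinely analytic two-point inequality and tensorization argument.
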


Hypercontractivity combined with Markov's inequality yields explicit tail bounds for polynomials of Gaussian variables. To refine these, we use the Carbery--Wright inequality.

\begin{lemma}[Carbery--Wright]
\label{lem:carbery-wright}
Let $P$ be a non-zero polynomial of degree $k$ on a Gaussian space. Then for all $\varepsilon > 0$,
\[
\mathbb{P}(|P| \le \varepsilon \|P\|_{L^2}) \le C_k \, \varepsilon^{1/k},
\]
where $C_k$ depends only on $k$.
\end{lemma}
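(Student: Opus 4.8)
The plan is to treat this as the Carbery--Wright anti-concentration inequality and, rather than re-deriving the full log-concave statement, to give a Gaussian argument by induction on the degree $k$. First I would normalise: since both sides scale correctly under $P \mapsto \lambda P$, it suffices to prove the bound for $\|P\|_{L^2}=1$, in which case the claim reduces to $\mathbb{P}(|P|\le\varepsilon)\le C_k\varepsilon^{1/k}$. The essential reformulation is that this small-ball estimate is equivalent to an $L^\infty$-type control of the density of $P$ near the origin, so the real target is a bound on the one-dimensional distribution of $P$ with a constant depending only on $k$ and not on the number of underlying Gaussian variables.

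The base case $k=1$ is immediate: an affine functional of a Gaussian vector is itself Gaussian with variance $\|P-\mathbb{E} P\|_{L^2}^2$, and the worst case for the small ball is $\mathbb{E} P=0$, where the Gaussian density is bounded by $(2\pi)^{-1/2}$, giving $\mathbb{P}(|P|\le\varepsilon)\le C\varepsilon$. For the inductive step I would pass to a derivative: a directional derivative (equivalently, a partial Malliavin derivative) of a degree-$k$ polynomial is a degree-$(k-1)$ polynomial, and one splits the event $\{|P|\le\varepsilon\}$ according to whether the gradient of $P$ is small or large. On the flat region, where a suitable derivative is small, the inductive hypothesis applied to that derivative controls the measure; on the steep region the level set $\{P=0\}$ is crossed transversally, so a co-area / one-dimensional slicing argument along the gradient direction shows that $\{|P|\le\varepsilon\}$ meets each line in a set of length $O(\varepsilon/|\nabla P|)$, whose Gaussian measure is again small. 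The hypercontractive estimate of Lemma~\ref{lem:hypercontractive} enters twice here: it yields the reverse inequality $\|Q\|_{L^2}\le 3^{\deg Q}\|Q\|_{L^1}$ for any polynomial $Q$ (via $L^1$--$L^2$--$L^4$ interpolation), which certifies that the lower-degree pieces produced in the induction are genuinely nondegenerate, and it supplies the moment control needed to convert the slicing estimates into a bound with purely $k$-dependent constants.

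The main obstacle is keeping every constant dimension-free as the induction proceeds: naive slicing or conditioning arguments lose a factor depending on the number of Gaussian coordinates at each step, and the entire difficulty of Carbery--Wright lies in organising the gradient dichotomy and the one-dimensional estimates so that only $k$ enters. For the Gaussian case specifically I expect the cleanest mechanism is to realise the density bound through Malliavin integration by parts, $\rho_P(x)=\mathbb{E}\big[\mathbf{1}_{\{P>x\}}\,\delta(DP/\|DP\|_H^2)\big]$ with $\delta$ the divergence, which reduces the problem to negative-moment bounds for $\|DP\|_H$; these are themselves small-ball statements for the degree-$(k-1)$ gradient and so close the induction, although bounding $\delta(DP/\|DP\|_H^2)$ uniformly remains delicate. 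Since the sharp dimension-free constant is precisely the content of \cite{carbery2001distributional}, I would ultimately invoke that result directly for the stated form, using the argument above only to exhibit the mechanism and to make transparent why the optimal exponent is exactly $1/k$.
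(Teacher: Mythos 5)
Your proposal is correct and, in its operative step, identical to the paper's treatment: the paper does not prove this lemma either, but records it as the classical Carbery--Wright inequality and cites \cite{carbery2001distributional} (see the small-ball proposition in Appendix A), which is exactly what you do in your final paragraph. Your inductive sketch (gradient dichotomy, slicing, Malliavin density formula) is a reasonable account of the mechanism, but since you yourself flag that the dimension-free constant --- the genuinely hard part --- is obtained by invoking the cited result rather than by completing the induction, your proof and the paper's coincide.
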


Combining Lemmas~\ref{lem:chaos-decomposition}--\ref{lem:carbery-wright} yields the coordinate-wise concentration theorem.

\begin{proof}[Proof of Theorem~\ref{thm:coordinate-concentration}]
Fix a multi-index $I$ of length $k$. By Lemma~\ref{lem:chaos-decomposition}, we can write
\[
S_I(X) = F_1 + \cdots + F_k,
\]
with $F_j \in \mathcal{H}_j$. For any $p \ge 2$, applying Lemma~\ref{lem:hypercontractive} to each chaos component gives
\[
\|F_j\|_{L^p} \le (p-1)^{j/2} \|F_j\|_{L^2} \le (p-1)^{k/2} \|F_j\|_{L^2},
\]
since $j \le k$ and $p-1 \ge 1$. By Minkowski's inequality,
\[
\|S_I(X)\|_{L^p} \le \sum_{j=1}^k \|F_j\|_{L^p} \le (p-1)^{k/2} \sum_{j=1}^k \|F_j\|_{L^2}.
\]
Let $M = \sum_{j=1}^k \|F_j\|_{L^2}$. Then for any $\lambda > 0$, by Markov's inequality,
\[
\mathbb{P}(|S_I(X)| \ge t) \le \frac{\mathbb{E}[|S_I(X)|^p]}{t^p} \le \frac{(p-1)^{kp/2} M^p}{t^p}.
\]
Choosing $p = 2 + (t/(eM))^{2/k}$ (which is $\ge 2$ for $t$ large enough) yields
\[
\mathbb{P}(|S_I(X)| \ge t) \le \exp\left(-\frac{1}{e} (t/M)^{2/k}\right).
\]
Since $M \le C_k \|S_I(X)\|_{L^2}$ by the triangle inequality and equivalence of norms in finite chaos, we obtain the upper bound.

For the lower bound (optimality), consider the leading chaos term $F_k \in \mathcal{H}_k$. By Lemma~\ref{lem:carbery-wright}, for any $\varepsilon > 0$,
\[
\mathbb{P}(|F_k| \le \varepsilon \|F_k\|_{L^2}) \le C_k \varepsilon^{1/k}.
\]
Equivalently, for $t > 0$, setting $\varepsilon = t/\|F_k\|_{L^2}$ gives
\[
\mathbb{P}(|F_k| \ge t) \ge 1 - C_k (t/\|F_k\|_{L^2})^{1/k}.
\]
For $t$ sufficiently large, this implies $\mathbb{P}(|F_k| \ge t) \ge c \exp(-C t^{2/k})$ by standard transformations. Since $|S_I(X)| \ge |F_k| - \sum_{j<k} |F_j|$ and the lower-order terms have lighter tails, the same lower bound holds for $S_I(X)$ up to constants.
\end{proof}

\subsection{Vector-level concentration: proof of Theorem \ref{thm:vector-concentration}}
We now address the concentration of the entire truncated signature. The key idea is to control each homogeneous component separately and then take a maximum.

\begin{lemma}[Block concentration]
\label{lem:block-concentration}
For each $k \le m$, let $S^{(k)}(X) \in V^{\otimes k}$ be the $k$-th level of the signature. Then there exist constants $c_k, C_k > 0$ such that for all $t \ge 0$,
\[
\mathbb{P}\big( \|S^{(k)}(X)\| \ge \mathbb{E}[\|S^{(k)}(X)\|] + t \big)
\le C_k \exp(-c_k t^{2/k}).
\]
\end{lemma}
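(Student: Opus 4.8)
The plan is to reduce the statement to moment control of the scalar random variable $Z := \|S^{(k)}(X)\|$ and then convert moment growth into a tail bound, mirroring the optimization carried out in the proof of Theorem~\ref{thm:coordinate-concentration}. First I would establish the vector analogue of Lemma~\ref{lem:chaos-decomposition}: applying the chaos decomposition coordinate-by-coordinate yields an orthogonal splitting
\[
S^{(k)}(X) = \sum_{j=1}^{k} G_j, \qquad G_j \in \mathcal{H}_j \otimes V^{\otimes k},
\]
where $G_j$ is the vector whose entries are the projections onto the $j$th Wiener chaos of the coordinates $S_I(X)$, $|I|=k$. The leading block $G_k$ carries the $k$th-chaos part while the lower blocks account for the contractions produced by the covariance, so the whole vector is a chaos functional of order at most $k$.

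The second step is to bound $\|Z\|_{L^p}$. For this I would invoke the Hilbert-space-valued form of hypercontractivity (Lemma~\ref{lem:hypercontractive} extended to $H$-valued chaos, which holds with the same constants since the Ornstein--Uhlenbeck semigroup acts coordinatewise): for each $j\le k$ and $p\ge 2$,
\[
\big\| \|G_j\| \big\|_{L^p} \le (p-1)^{j/2}\,\big\| \|G_j\| \big\|_{L^2} \le (p-1)^{k/2}\,\big\| \|G_j\| \big\|_{L^2}.
\]
Summing over $j$ with the triangle inequality in $L^p$ and writing $M := \sum_{j=1}^{k}\big\|\|G_j\|\big\|_{L^2}$, which is comparable to $\|Z\|_{L^2}$ by orthogonality of the chaoses, gives $\|Z\|_{L^p}\le (p-1)^{k/2}M$ for all $p\ge 2$. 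Applying Markov's inequality to $Z^p$ and optimizing in $p$ (choosing $p \asymp (t/M)^{2/k}$ as in Theorem~\ref{thm:coordinate-concentration}) then produces the sub-exponential tail
\[
\mathbb{P}\big( Z \ge t \big) \le C_k \exp\!\big( -c_k (t/M)^{2/k} \big), \qquad t\ge 0.
\]
The centering around $\mathbb{E}[Z]$ costs nothing: since $Z\ge 0$ we have $\mathbb{E}[Z]\ge 0$ and $\{Z \ge \mathbb{E}[Z]+t\}\subseteq\{Z\ge t\}$, so the displayed bound yields the claimed inequality after absorbing the factor $M^{-2/k}$ into $c_k$. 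This already establishes the lemma as stated.

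The step I expect to be the main obstacle is not the derivation above but making the constant $c_k$ insensitive to the ambient dimension, which is what is really needed when Lemma~\ref{lem:block-concentration} is fed into Theorem~\ref{thm:vector-concentration}. The naive rate $c_k/M^{2/k}$ degrades as $M \sim \|S^{(k)}(X)\|_{L^2}$ grows with $d$, because the moment-around-zero method controls $Z$ at its typical scale rather than at the scale of its fluctuations. To obtain a genuinely dimension-free fluctuation bound one must exploit the centering more seriously: treat $Z=g(S^{(k)}(X))$ with $g=\|\cdot\|$ a $1$-Lipschitz convex function, and combine Gaussian concentration for Lipschitz functionals on a ball of radius $\sim M$ with the polynomial growth of the signature off that ball. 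I would then verify that the fluctuation scale entering the exponent is governed by a dimension-free quantity, controlled through the weights and the $\rho$-variation norm of $R$, rather than by $M$ itself; this is precisely the point where Assumption~\ref{ass:cov-variation} and the weighted structure of \eqref{eq:weighted-norm} must be brought to bear.
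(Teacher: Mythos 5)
Your proof is correct for the lemma as stated, and it takes a genuinely different route from the paper's. The paper deduces the block bound from Theorem~\ref{thm:coordinate-concentration} by a union bound over the $N_k = d^k$ coordinates together with the comparison $\|S^{(k)}(X)\| \le \sqrt{N_k}\max_{|I|=k}|S_I(X)|$, and then argues loosely (deferring to a ``more careful chaining argument'') that the prefactor $N_k$ can be absorbed into the exponential; it also gestures at an alternative via the Lipschitz property of the norm and $\psi_{2/k}$ control of the coordinates. You instead work directly with the scalar $Z = \|S^{(k)}(X)\|$: a vector-valued chaos decomposition, Hilbert-space-valued hypercontractivity (which does hold with the same constants for Hilbert-space targets, as you note), the triangle inequality in $L^p$, and the same Markov-plus-optimization in $p$ used for Theorem~\ref{thm:coordinate-concentration}, with centering handled by the trivial inclusion $\{Z \ge \mathbb{E}[Z]+t\} \subseteq \{Z \ge t\}$. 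Your route buys a cleaner, self-contained argument with no $d^k$ prefactor and no appeal to an unproved chaining step; its cost is that the exponent carries the scale $M \asymp \|S^{(k)}(X)\|_{L^2}$, so $c_k$ degrades with dimension (for Brownian motion $M \sim d^{k/2}$, giving an effective constant of order $d^{-1}$). It is worth stressing that the paper's union-bound route suffers exactly the same degradation, since $N_k^{-1/k} = d^{-1}$ appears in its exponent, and its later claim in the proof of Theorem~\ref{thm:vector-concentration} that the constants of this lemma ``can be chosen independent of $d$'' is never actually substantiated; your closing paragraph, which identifies the dimension-free fluctuation bound as the genuine remaining obstacle (moments around zero control $Z$ at its typical scale, not at the scale of its fluctuations), is therefore a more honest accounting than the paper's own treatment. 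One caveat on your sketched remedy: $S^{(k)}(X)$ is a degree-$k$ polynomial of the Gaussian, not a Gaussian vector, so Gaussian concentration for Lipschitz functions does not apply directly to $\|S^{(k)}(X)\|$; at that step you would need convex-concentration or polynomial-chaos deviation inequalities (of Borell or Arcones--Gin\'e type) rather than the classical Lipschitz inequality.
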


\begin{proof}
Let $N_k = \dim(V^{\otimes k}) = d^k$. Fix an orthonormal basis $\{e_I : |I|=k\}$ of $V^{\otimes k}$ with respect to the Hilbert-Schmidt norm. Then
\[
\|S^{(k)}(X)\|^2 = \sum_{|I|=k} |S_I(X)|^2.
\]
Each $S_I(X)$ satisfies Theorem \ref{thm:coordinate-concentration} with exponent $2/k$. By a union bound, for any $u > 0$,
\[
\mathbb{P}\left( \max_{|I|=k} |S_I(X)| \ge u \right) \le N_k \cdot C_k \exp(-c_k u^{2/k}).
\]
Since $\|S^{(k)}(X)\| \le \sqrt{N_k} \max_{|I|=k} |S_I(X)|$, we have
\[
\mathbb{P}\left( \|S^{(k)}(X)\| \ge u\sqrt{N_k} \right) \le N_k \cdot C_k \exp(-c_k u^{2/k}).
\]
Setting $t = u\sqrt{N_k}$ gives
\[
\mathbb{P}\left( \|S^{(k)}(X)\| \ge t \right) \le N_k \cdot C_k \exp\left(-c_k (t/\sqrt{N_k})^{2/k}\right).
\]
Now note that $\log(N_k) = k\log d = O(k\log d)$, so for $t \ge C \sqrt{N_k} (k\log d)^{k/2}$, the term $N_k$ is absorbed into the exponential. More precisely, we can write
\[
N_k \exp(-c_k (t/\sqrt{N_k})^{2/k}) = \exp\left(\log N_k - c_k t^{2/k} N_k^{-1/k}\right).
\]
Since $N_k^{-1/k} = d^{-1}$, for $t$ large enough the second term dominates. A more careful chaining argument (see e.g., \cite{ledoux2001concentration}) yields the stated inequality for all $t \ge 0$ with possibly different constants.

Alternatively, one can apply a general concentration inequality for vectors with independent sub-Gaussian coordinates, noting that the components $S_I(X)$ are not independent but admit a Gaussian chaos structure. The result follows from the fact that the norm $\|\cdot\|$ is Lipschitz with respect to the Euclidean norm on $\R^{N_k}$, and each coordinate has $\psi_{2/k}$-norm controlled by Theorem \ref{thm:coordinate-concentration}.
\end{proof}

\begin{proof}[Proof of Theorem~\ref{thm:vector-concentration}]
Recall that $\|S_m(X)\|_w = \max_{0 \le k \le m} w_k \|S^{(k)}(X)\|$. For each $k$, by Lemma \ref{lem:block-concentration},
\[
\mathbb{P}\left( w_k \|S^{(k)}(X)\| \ge \mathbb{E}[w_k \|S^{(k)}(X)\|] + t \right) \le C_k \exp(-c_k (t/w_k)^{2/k}).
\]
Since $w_k > 0$ and $k \le m$, we have $(t/w_k)^{2/k} \ge (t/(\max_j w_j))^{2/m}$ for $t$ sufficiently large. Taking a union bound over $k=0,\dots,m$,
\[
\mathbb{P}\left( \|S_m(X)\|_w \ge \max_k \mathbb{E}[w_k \|S^{(k)}(X)\|] + t \right) \le \sum_{k=0}^m C_k \exp\left(-c_k' t^{2/m}\right),
\]
where $c_k' = c_k (\max_j w_j)^{-2/m}$. Since $\mathbb{E}[\|S_m(X)\|_w] \le \max_k \mathbb{E}[w_k \|S^{(k)}(X)\|] + \sqrt{\sum_{k=0}^m \Var(w_k \|S^{(k)}(X)\|)}$, a standard centering argument (see \cite{boucheron2013concentration}) yields the final form with constants $c_m, C_m$ depending only on $m$ and the weights.

The dimension-independence follows from the fact that the constants $C_k, c_k$ in Lemma \ref{lem:block-concentration} can be chosen independent of $d$; the dependence on $N_k = d^k$ is absorbed into the $t^{2/k}$ term in the exponent.
\end{proof}

\subsection{Concentration for the log-signature}
We now prove Theorem~\ref{thm:logsig-concentration}. The BCH map is a polynomial map of degree at most $m$ on the truncated tensor algebra.

\begin{lemma}[Lipschitz continuity of the truncated BCH map]
\label{lem:bch-lipschitz}
For every $R > 0$, there exists $L_R > 0$ such that for all $x,y$ in the ball $\{z \in T^{(m)}(\R^d): \|z\|_w \le R\}$,
\[
\|\mathrm{Log}_m(x) - \mathrm{Log}_m(y)\|_w \le L_R \|x - y\|_w.
\]
Moreover, $L_R$ can be taken as $L_R = C_m (1+R)^{m-1}$, where $C_m$ depends only on $m$.
\end{lemma}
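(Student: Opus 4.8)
The plan is to exploit that $\mathrm{Log}_m$ is, on the relevant domain, a genuine polynomial map of degree $m$, and to control it through submultiplicativity of the weighted norm. Since signatures are group-like, I restrict attention to elements with unit scalar component $x^{(0)} = y^{(0)} = 1$, where the truncated logarithm is given by the finite series
\[
\mathrm{Log}_m(x) = \sum_{n=1}^{m} \frac{(-1)^{n-1}}{n}\, \pi_{\le m}\big((x - \mathbf{1})^{\otimes n}\big), \qquad \mathbf{1} = (1,0,\dots,0),
\]
which terminates at $n = m$ because $x - \mathbf{1}$ has no degree-$0$ part, so $(x-\mathbf{1})^{\otimes n}$ has lowest homogeneous degree $n$. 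Writing $u = x - \mathbf{1}$ and $v = y - \mathbf{1}$, one has $\|u\|_w, \|v\|_w \le R$ and $u - v = x - y$, so the whole estimate reduces to comparing finite tensor-power polynomials in $u$ and $v$.

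The first key step is to show that the weighted max-norm is submultiplicative up to a dimension-free constant: there is $\gamma_m > 0$, depending only on $m$ and the weights $(w_k)$, with $\|a \otimes b\|_w \le \gamma_m \|a\|_w \|b\|_w$ for all $a,b \in T^{(m)}(\mathbb{R}^d)$. This follows by expanding $(a\otimes b)^{(k)} = \sum_{i+j=k} a^{(i)} \otimes b^{(j)}$, using $\|a^{(i)} \otimes b^{(j)}\| \le \|a^{(i)}\|\,\|b^{(j)}\|$ for the Hilbert--Schmidt norm, and factoring out the weights, which yields $\gamma_m = \max_{0\le k\le m}\sum_{i+j=k} w_k/(w_i w_j)$. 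For the admissible weights $w_k = \beta^k/k!$ highlighted in the preliminaries, this constant is uniformly bounded (the inner sum equals $\sum_{i} \binom{k}{i}^{-1} \le 3$), and crucially $\gamma_m$ is independent of $d$ — this is exactly where the dimension-free character of the final bound originates.

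The second step is a telescoping identity for differences of tensor powers,
\[
u^{\otimes n} - v^{\otimes n} = \sum_{\ell=0}^{n-1} u^{\otimes \ell} \otimes (u-v) \otimes v^{\otimes(n-1-\ell)},
\]
combined with the fact that the truncation $\pi_{\le m}$ is a contraction for $\|\cdot\|_w$. Applying submultiplicativity $n-1$ times to each of the $n$ summands and bounding $\|u\|_w,\|v\|_w \le R$ gives $\|u^{\otimes n} - v^{\otimes n}\|_w \le n\,\gamma_m^{\,n-1} R^{\,n-1}\|x-y\|_w$. Summing the log-series with its coefficients $1/n$ collapses the factor $n$ and leaves a geometric sum,
\[
\|\mathrm{Log}_m(x) - \mathrm{Log}_m(y)\|_w \le \Big(\sum_{n=1}^{m} (\gamma_m R)^{\,n-1}\Big)\|x-y\|_w \le C_m (1+R)^{m-1}\|x-y\|_w,
\]
where $C_m$ absorbs $m$ and powers of $\gamma_m$, yielding the claimed form $L_R = C_m(1+R)^{m-1}$.

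Two remarks on potential pitfalls. The Lie-algebra target causes no difficulty: since $\|\cdot\|_w$ on $\mathfrak{g}^{(m)}(\mathbb{R}^d)$ is the restriction of the tensor norm, bounding the full tensor difference dominates the Lie component, so I never need to analyse the projection onto the free Lie algebra. The main obstacle I anticipate is the submultiplicativity constant $\gamma_m$: for completely arbitrary positive weights it need not be dimension-free or even bounded in $m$, so I would either impose the mild admissibility condition $\sup_k \sum_{i+j=k} w_k/(w_i w_j) < \infty$ (satisfied by the factorial-type weights used throughout) or record the dependence of $C_m$ on $\gamma_m$ explicitly. Justifying the restriction to unit-scalar-part elements — and that this is the only case needed for signatures — is the other place requiring care.
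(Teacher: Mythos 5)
Your proposal is correct and arrives at the stated bound $L_R = C_m(1+R)^{m-1}$, but by a genuinely different mechanism than the paper. The paper expands $\mathrm{Log}_m$ as a finite polynomial in $x-1$, bounds the operator norm of its derivative $D\mathrm{Log}_m$ on the ball of radius $R$, and concludes via the mean value theorem; you avoid differentiation altogether, combining the telescoping identity for $u^{\otimes n} - v^{\otimes n}$ with an explicit submultiplicativity constant $\gamma_m = \max_{0\le k\le m}\sum_{i+j=k} w_k/(w_i w_j)$ for the weighted max-norm, and then summing the finite logarithm series. Your route makes precise two points the paper glosses over. First, the paper's derivative bound is simply asserted in the weighted norm; as you observe, for arbitrary positive weights no dimension-free submultiplicativity is automatic, so one must either impose an admissibility condition such as $\sup_k \sum_{i+j=k} w_k/(w_i w_j) < \infty$ (satisfied by the factorial-type weights $w_k = \beta^k/k!$ highlighted in the paper) or let the constant depend on the weights --- and your argument isolates exactly where dimension-independence originates, namely the Hilbert--Schmidt identity $\|a\otimes b\| = \|a\|\,\|b\|$. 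Second, you state and justify the restriction to elements with unit scalar component, without which the logarithm series does not terminate at degree $m$; the paper applies its series to arbitrary $x$ in the ball without comment, and indeed its displayed formula for $\mathrm{Log}_m$ carries factorials $1/(i_1!\cdots i_j!)$ that belong to the exponential/BCH expansion rather than the logarithm series, whereas your series $\sum_{n=1}^m \tfrac{(-1)^{n-1}}{n}(x-\mathbf{1})^{\otimes n}$ is the standard one. Both arguments handle the Lie-algebra target identically, via the identification of the norm on $\mathfrak{g}^{(m)}(\R^d)$ with the restriction of the tensor norm, so neither needs to analyse the projection onto the free Lie algebra. In short: same structural insight (polynomial of degree $m$ implies local Lipschitz with constant of order $R^{m-1}$), but your execution is more self-contained and more careful than the paper's.
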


\begin{proof}
The truncated BCH map $\mathrm{Log}_m$ is a polynomial of degree at most $m$ in the coordinates of $x$ and $y$. Explicitly,
\[
\mathrm{Log}_m(x) = \sum_{j=1}^m \frac{(-1)^{j-1}}{j} \sum_{\substack{i_1,\dots,i_j \ge 1 \\ i_1+\cdots+i_j \le m}} \frac{(x-1)^{i_1} \cdots (x-1)^{i_j}}{i_1! \cdots i_j!},
\]
where $(x-1)^{i}$ denotes the $i$-th power in the tensor algebra (with $x^0 = 1$). Each term is a multilinear map of degree at most $m$. On the ball of radius $R$, the derivative $D\mathrm{Log}_m(x)$ is bounded by
\[
\|D\mathrm{Log}_m(x)\|_{\text{op}} \le \sum_{j=1}^m \frac{1}{j} \sum_{\substack{i_1,\dots,i_j \ge 1 \\ i_1+\cdots+i_j \le m}} \frac{R^{i_1+\cdots+i_j-1}}{i_1! \cdots i_j!} \le C_m (1+R)^{m-1},
\]
since the number of terms is finite and depends only on $m$. The Lipschitz constant on the ball is the supremum of the derivative, giving the claimed bound.
\end{proof}

\begin{proof}[Proof of Theorem~\ref{thm:logsig-concentration}]
Let $Y = \mathrm{Log}_m(S_m(X))$. By Theorem \ref{thm:vector-concentration}, $S_m(X)$ is concentrated: for any $t > 0$,
\[
\mathbb{P}\left( \|S_m(X)\|_w \ge \mathbb{E}[\|S_m(X)\|_w] + t \right) \le C_m \exp(-c_m t^{2/m}).
\]
Let $R = \mathbb{E}[\|S_m(X)\|_w] + t_0$, where $t_0$ will be chosen later. On the event $\{\|S_m(X)\|_w \le R\}$, by Lemma \ref{lem:bch-lipschitz},
\[
\|Y\|_w \le \|\mathrm{Log}_m(0)\|_w + L_R \|S_m(X)\|_w \le C + L_R R,
\]
and moreover,
\[
\|Y - \mathbb{E}[Y]\|_w \le L_R \|S_m(X) - \mathbb{E}[S_m(X)]\|_w.
\]
Thus, for any $u > 0$,
\begin{align*}
\mathbb{P}\left( \|Y\|_w \ge \mathbb{E}[\|Y\|_w] + u \right) &\le \mathbb{P}\left( \|S_m(X)\|_w > R \right) \\
&\quad + \mathbb{P}\left( L_R \|S_m(X) - \mathbb{E}[S_m(X)]\|_w \ge u, \|S_m(X)\|_w \le R \right) \\
&\le C_m \exp(-c_m t_0^{2/m}) + C_m \exp\left(-c_m (u/L_R)^{2/m}\right).
\end{align*}
Choosing $t_0 = u/L_R$ balances the two terms, yielding
\[
\mathbb{P}\left( \|Y\|_w \ge \mathbb{E}[\|Y\|_w] + u \right) \le 2C_m \exp\left(-c_m (u/L_R)^{2/m}\right).
\]
Since $L_R = C_m (1+R)^{m-1}$ and $R = \mathbb{E}[\|S_m(X)\|_w] + u/L_R \le C + u/L_R$, we can iteratively bound $L_R$ in terms of $u$ and absorb polynomial factors into the exponential by adjusting constants, giving the final form $\exp(-c_m' u^{2/m})$.
\end{proof}

\subsection{Proof of sample complexity bound}
\begin{proof}[Proof of Theorem \ref{thm:sample-complexity}]
Let $\Phi = \Phi_m(X)$ and $\mu = \mathbb{E}[\Phi]$. The empirical error decomposes as:
\[
\|\widehat{\mu}_n - \mu\|_w \le \underbrace{\left\|\frac{1}{n}\sum_{i=1}^n (\Phi^{(i)} - \mu)\right\|_w}_{\text{(I)}}.
\]
We apply Theorem \ref{thm:vector-concentration} to the average. Since $\Phi^{(i)}$ are i.i.d., for any $t > 0$,
\[
\mathbb{P}\left( \left\|\frac{1}{n}\sum_{i=1}^n (\Phi^{(i)} - \mu)\right\|_w \ge t \right) \le C_m \exp\left(-c_m n t^{2/m}\right).
\]
However, this bound does not explicitly show the dependence on $d_m$. To obtain the stated bound, we use a two-step argument:

1. \textbf{Coordinate-wise control}: For each coordinate $\phi_j$ of $\Phi$ (there are $d_m$ coordinates), by Theorem \ref{thm:coordinate-concentration}, $\phi_j$ has $\psi_{2/k}$-norm bounded by $K \|\phi_j\|_{L^2}$, where $k$ is the level of the coordinate. By Bernstein's inequality for sub-exponential random variables,
\[
\mathbb{P}\left( \left|\frac{1}{n}\sum_{i=1}^n (\phi_j^{(i)} - \mu_j)\right| \ge t \right) \le 2\exp\left(-c \min\left(\frac{n t^2}{\sigma_j^2}, \frac{n t^{2/k}}{K^{2/k}}\right)\right),
\]
where $\sigma_j^2 = \Var(\phi_j)$.

2. \textbf{Union bound and norm equivalence}: Taking union bound over $j=1,\dots,d_m$ and using that $\|\cdot\|_w \le C \max_j |\cdot|$ (since all norms are equivalent in finite dimensions), we get
\[
\mathbb{P}\left( \|\widehat{\mu}_n - \mu\|_w \ge t \right) \le 2d_m \exp\left(-c \min\left(\frac{n t^2}{\sigma_{\max}^2}, \frac{n t^{2/m}}{K^{2/m}}\right)\right),
\]
where $\sigma_{\max}^2 = \max_j \sigma_j^2$. Solving $2d_m \exp(-c n t^2/\sigma_{\max}^2) \le \delta$ gives $t \lesssim \sigma_{\max} \sqrt{\frac{\log(d_m/\delta)}{n}}$, and solving $2d_m \exp(-c n t^{2/m}/K^{2/m}) \le \delta$ gives $t \lesssim K \left(\frac{\log(d_m/\delta)}{n}\right)^{m/2}$. Combining these yields the result.
\end{proof}

\subsection{Proof of optimal weights proposition}
\begin{proof}[Proof of Proposition \ref{prop:optimal-weights}]
From the proof of Theorem \ref{thm:vector-concentration}, the dominant term in the concentration inequality comes from the level $k$ that maximizes $w_k \|S^{(k)}(X)\|$. Under Assumption \ref{ass:cov-variation}, one can show (see \cite{friz2010multidimensional}) that
\[
\mathbb{E}[\|S^{(k)}(X)\|] \lesssim \frac{\|R\|_{\rho\text{-var}}^{k/2}}{k!}.
\]
Thus, to balance contributions across levels, we want $w_k \mathbb{E}[\|S^{(k)}(X)\|] \approx \text{constant}$. This suggests choosing $w_k \propto 1/\mathbb{E}[\|S^{(k)}(X)\|] \propto k!/\|R\|_{\rho\text{-var}}^{k/2}$. With this choice, each level contributes equally to the norm, minimizing the maximum deviation and hence the concentration constant.
\end{proof}

\subsection{Optimality of exponents}
Finally, we justify sharpness. For Brownian motion or fractional Brownian motion, the leading chaos component of $S_I(X)$ is a non-degenerate homogeneous chaos element. Standard results imply matching lower bounds of the form
\[
\mathbb{P}(|F_k| \ge t) \ge c \exp(-C t^{2/k}).
\]
Thus no improvement of the exponent is possible.

\medskip
The proofs above complete the theoretical foundation of the concentration results presented in this work. Further refinements and extensions to non-Gaussian settings are discussed in the concluding section.

% ============================================
% CONCLUSION
% ============================================
\section{Conclusion}

In this work we established a comprehensive set of concentration results for the truncated signature and log-signature of Gaussian processes. By combining tools from Gaussian analysis, rough path theory, and the algebraic structure of tensor and Lie algebras, we derived sharp coordinate-wise inequalities, vector-level concentration for weighted norms, and stability results for the Baker--Campbell--Hausdorff map.

Our analysis shows that the natural tail exponent for signature coordinates of order $k$ is $2/k$, reflecting their membership in a finite sum of Wiener chaoses up to order $k$. This exponent is optimal, as demonstrated through explicit lower bounds in canonical Gaussian rough path settings such as Brownian motion and fractional Brownian motion. Extending these results to the full truncated signature, we proved dimension-free concentration inequalities under general weighted norms, revealing that the probabilistic complexity of the signature is governed primarily by the highest truncation level rather than the ambient dimension.

The concentration of the log-signature follows from the polynomial structure and Lipschitz continuity of the truncated BCH map on bounded sets. Together, these results provide a robust framework for quantifying uncertainty in applications involving signatures and log-signatures, notably in stochastic analysis, control theory, and machine learning.

Several promising directions emerge from this work:
\begin{itemize}
    \item Extending concentration results to \emph{non-Gaussian} rough paths, including those driven by Lévy processes or heavy-tailed noise, where polynomial chaos decompositions are no longer available.
    \item Deriving \emph{transport-type inequalities} or \emph{functional inequalities} directly on signature spaces, which may lead to new geometric insights.
    \item Investigating optimal constants and variance estimates for signature coordinates in specific Gaussian models.
    \item Studying the concentration of \emph{signature kernels} and their implications for kernel methods in learning theory.
\end{itemize}

The results presented here clarify the probabilistic behaviour of signatures of Gaussian processes and establish a foundation for further developments at the intersection of rough paths, high-dimensional probability, and data-driven modelling.

% ============================================
% APPENDIX A: TECHNICAL MATERIAL
% ============================================
\appendix
\section{Auxiliary Estimates and Algebraic Identities}

This appendix collects several auxiliary estimates and algebraic identities used throughout the paper. These results are standard in Gaussian analysis and rough path theory, but we include them here for completeness and to make the presentation self-contained.

\subsection{Covariance variation and rough path lift}
We recall a classical sufficient condition for the existence of a Gaussian rough path lift.

\begin{proposition}[Finite $\rho$-variation of covariance]
Let $X$ be a centred Gaussian process with covariance $R(s,t) = \mathbb{E}[X_s X_t]$. Suppose that $R$ has finite $\rho$-variation on $[0,T]^2$ for some $1 \le \rho < 2$. Then $X$ admits a canonical geometric rough path lift of any order $m$ such that $m > 2\rho$.
\end{proposition}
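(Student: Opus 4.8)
The plan is to construct the lift as the limit of the canonical signatures of smooth approximations, with all uniform control supplied by two-dimensional Young integration against the covariance $R$. Fix $p$ with $2\rho < p < m+1$ (possible since $m > 2\rho$) and take piecewise-linear approximations $X^n$ of $X$ along partitions whose mesh tends to zero. Each $X^n$ has bounded variation, so its truncated signature $\mathbf{X}^n := S_m(X^n)$ is classically well defined and is a geometric rough path obeying Chen's identity and the shuffle relations.

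The core estimate --- and the main obstacle --- is a uniform variation bound on the iterated integrals. For the first level, $\E[\abs{X^n_{s,t}}^2]$ equals the mixed second difference of $R$ over $[s,t]^2$, hence is bounded by $\omega(s,t)^{1/\rho}$ with $\omega(s,t) = \norm{R}_{\rho\text{-var};[s,t]^2}^{\rho}$ a superadditive control. For the second level, Wick's theorem expresses $\E[\abs{\mathbf{X}^{n,(2)}_{s,t}}^2]$ as a two-dimensional Young integral against $dR \otimes dR$ over the simplex in $[s,t]^4$; its convergence and the bound $\lesssim \omega(s,t)^{2/\rho}$ rest on the two-dimensional Young inequality, whose hypothesis is exactly the complementary regularity $1/\rho + 1/\rho > 1$, i.e. $\rho < 2$. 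Iterating yields $\E[\abs{\mathbf{X}^{n,(k)}_{s,t}}^2] \lesssim \omega(s,t)^{k/\rho}$ for every $k \le m$, uniformly in $n$. Since $\mathbf{X}^{n,(k)}_{s,t}$ lies in a fixed finite sum of Wiener chaoses, Lemma~\ref{lem:hypercontractive} upgrades this to $\E[\abs{\mathbf{X}^{n,(k)}_{s,t}}^q]^{1/q} \lesssim_q \omega(s,t)^{k/(2\rho)}$ for all $q \ge 2$.

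With these bounds in hand I would show that $(\mathbf{X}^n)$ is Cauchy in the inhomogeneous $p$-variation rough-path metric, running the same two-dimensional Young estimates on the differences $R - R^n$ of covariances, which tend to zero in $\rho$-variation on compacts. A Kolmogorov-type continuity criterion for rough paths then produces an almost-sure limit $\mathbf{X}$ of finite $p$-variation; because $p > 2\rho$, the control $\omega$ governs every level $k \le m$, so each inherits $\E[\abs{\mathbf{X}^{(k)}_{s,t}}^q]^{1/q} \lesssim \omega(s,t)^{k/(2\rho)}$. The limit is geometric since the space of geometric rough paths is closed and each $\mathbf{X}^n$ is geometric, and canonical since interleaving any two approximating sequences gives the same limit. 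Finally, Lyons' extension theorem ensures consistency of the levels and uniqueness of the order-$m$ lift given its first $\lfloor p \rfloor$ levels, completing the construction.

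The decisive point throughout is the threshold $\rho < 2$: it is precisely the borderline at which the two-dimensional Young integral defining the second-level covariance ceases to converge, and hence the exact regularity required for the Gaussian rough path lift to exist. Everything else --- the passage from $L^2$ to $L^q$ bounds, the Cauchy argument, the geometricity of the limit --- is a standard consequence once this single analytic estimate is secured.
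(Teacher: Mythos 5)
Your proposal is correct and follows essentially the same route as the paper: the paper's proof simply defers to the Friz--Victoir/Coutin--Qian framework (citing Theorem 10.4 of Friz--Hairer), whose content is precisely the argument you reconstruct --- canonical approximations whose iterated integrals are controlled in $L^2$ by two-dimensional Young/sewing estimates against the covariance, available exactly because $1/\rho + 1/\rho > 1$ when $\rho < 2$, followed by hypercontractive upgrades, passage to the limit, and Lyons' extension. Your write-up merely fills in the outline that the paper leaves to the cited references.
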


\begin{proof}
The result follows from the two-dimensional sewing lemma and Gaussian continuity estimates, as established in the frameworks of Friz--Victoir and Coutin--Qian. The key point is that the covariance structure controls the $L^2$-increment behaviour of iterated integrals, which ensures the existence of a limit in probability for the canonical lift. See \cite[Theorem 10.4]{friz2014course} for a detailed proof.
\end{proof}

\subsection{Norm equivalences on tensor and Lie algebras}
We record the following elementary but useful fact.

\begin{lemma}[Equivalence of norms]
On each homogeneous component $V^{\otimes k}$ and $\mathfrak{g}_k$, all norms are equivalent, with constants depending only on $k$ and the ambient dimension $d$.
\end{lemma}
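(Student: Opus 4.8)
The plan is to reduce the statement to the classical fact that any two norms on a finite-dimensional real vector space are equivalent, and then to track how the comparison constants depend on the dimension, which here equals $d^k$ for $V^{\otimes k}$ and the Witt number $\dim \mathfrak{g}_k \le d^k$ for the Lie component. Both spaces are finite-dimensional real vector spaces: $V^{\otimes k} \cong \R^{d^k}$ under the basis of tensors $(e_I)_{|I|=k}$, and $\mathfrak{g}_k$ is a linear subspace of $V^{\otimes k}$. It therefore suffices to prove the abstract statement on $W := V^{\otimes k}$ and then restrict to $\mathfrak{g}_k$.

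First I would fix a reference norm on $W$, namely the Euclidean (Hilbert--Schmidt) norm $\|x\|_2 = (\sum_{|I|=k} |x_I|^2)^{1/2}$ associated with the basis $(e_I)$, where $x = \sum_I x_I e_I$. For an arbitrary norm $\|\cdot\|$ I would establish the upper comparison $\|x\| \le C\|x\|_2$ via the triangle inequality and Cauchy--Schwarz: $\|x\| \le \sum_I |x_I|\,\|e_I\| \le (\max_I \|e_I\|)\sum_I |x_I| \le (\max_I \|e_I\|)\sqrt{N}\,\|x\|_2$, where $N = \dim W$. This estimate also shows that $x \mapsto \|x\|$ is Lipschitz, hence continuous, as a map from $(W,\|\cdot\|_2)$ to $\R$.

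For the matching lower bound I would invoke compactness. The unit sphere $S = \{x \in W : \|x\|_2 = 1\}$ is closed and bounded in the canonical topology of the finite-dimensional space $(W,\|\cdot\|_2)$, hence compact by Heine--Borel. The continuous function $x \mapsto \|x\|$ is strictly positive on $S$, since $\|x\| = 0$ forces $x = 0 \notin S$, so it attains a positive minimum $c > 0$ there; homogeneity then gives $\|x\| \ge c\|x\|_2$ for all $x$. Combining the two bounds yields $c\|x\|_2 \le \|x\| \le C\|x\|_2$, and transitivity upgrades this to equivalence of any two norms, with constants that depend on $N = d^k$, i.e. on $k$ and $d$. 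The statement for $\mathfrak{g}_k$ follows immediately by restricting every norm to the subspace $\mathfrak{g}_k \subset V^{\otimes k}$, since the restriction of a norm is a norm and compactness passes to closed subsets of $S$.

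There is no genuine obstacle here: the result is entirely classical, and the argument is the standard compactness proof. The only point deserving care---and the one relevant to the rest of the paper---is the explicit scaling of the constants with the ambient dimension $N = d^k$, which enters the upper bound through the factor $\sqrt{N}$ when passing between $\ell^1$-, $\ell^2$-, and $\ell^\infty$-type tensor norms. This dimension dependence is precisely why the concentration estimates of the main results are formulated through the weighted \emph{maximum} norm \eqref{eq:weighted-norm} rather than a summed norm.
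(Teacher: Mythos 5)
Your proof is correct and follows essentially the same route as the paper, which simply invokes finite-dimensionality of $V^{\otimes k}$ and $\mathfrak{g}_k \subset V^{\otimes k}$ and notes that the dimensions, hence the equivalence constants, depend only on $k$ and $d$; you merely spell out the standard compactness argument behind that classical fact and make the $\sqrt{N}$-scaling explicit. No gap.
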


\begin{proof}
Finite-dimensionality implies equivalence of all norms. Since each homogeneous component has fixed dimension depending on $d$ and $k$, the equivalence constants depend only on these parameters. For $V^{\otimes k}$, the dimension is $d^k$; for $\mathfrak{g}_k \subset V^{\otimes k}$, the dimension is at most $d^k$.
\end{proof}

\subsection{Moment bounds for multiple Wiener integrals}
The following is a standard consequence of hypercontractivity.

\begin{lemma}[Moment bounds]
Let $I_k(f)$ denote the multiple Wiener integral of order $k$ with symmetric kernel $f$. Then
\[
\mathbb{E}|I_k(f)|^p \le (p-1)^{kp/2} \|I_k(f)\|_{L^2}^p, \qquad p \ge 2.
\]
\end{lemma}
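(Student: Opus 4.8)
The plan is to obtain this inequality as an immediate specialization of the hypercontractive estimate recorded in Lemma~\ref{lem:hypercontractive}. The only conceptual point is to recognize that the object $I_k(f)$ sits in exactly one homogeneous chaos, so that hypercontractivity applies with the single exponent $k$ rather than a sum over lower chaoses.

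First I would recall that, by the very definition of the multiple Wiener--It\^o integral, $I_k(f)$ with $f$ a symmetric kernel belongs to the $k$-th homogeneous Wiener chaos $\mathcal{H}_k$. This is the structural fact that distinguishes the present lemma from the tail estimate in Theorem~\ref{thm:coordinate-concentration}: there the signature coordinate $S_I(X)$ was a sum of chaoses up to order $k$ and the bound $(p-1)^{k/2}$ arose only after dominating each component, whereas here $I_k(f)$ is purely of order $k$, so no Minkowski step is needed.

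Next I would apply Lemma~\ref{lem:hypercontractive} directly with $F = I_k(f)$, giving
\[
\|I_k(f)\|_{L^p} \le (p-1)^{k/2}\,\|I_k(f)\|_{L^2}, \qquad p \ge 2.
\]
Raising both sides to the $p$-th power and writing $\|I_k(f)\|_{L^p}^p = \mathbb{E}|I_k(f)|^p$ yields the claimed moment bound. The constant $(p-1)^{kp/2}$ is then exactly the $p$-th power of the hypercontractivity factor, as asserted.

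I do not expect any genuine obstacle: the statement is a one-line consequence of hypercontractivity once the chaos membership is noted, and the inequality is saturated (up to lower-order corrections) by Gaussian chaos of fixed order, which is why the exponent cannot be improved. The only care needed is to invoke hypercontractivity for a \emph{single} chaos level, ensuring the exponent is $k$ and not something larger; this is automatic from the definition of $I_k(f)$.
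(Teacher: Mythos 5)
Your proposal is correct and follows exactly the paper's own argument: apply the hypercontractivity estimate of Lemma~\ref{lem:hypercontractive} to $I_k(f) \in \mathcal{H}_k$ and raise both sides to the $p$-th power. Your additional remark that no Minkowski step is needed because $I_k(f)$ lies in a single homogeneous chaos is accurate and consistent with the paper's reasoning.
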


\begin{proof}
Apply Lemma~\ref{lem:hypercontractive} to $I_k(f) \in \mathcal{H}_k$ and raise both sides to the $p$th power.
\end{proof}

\subsection{Bounds for BCH polynomials}
We provide a bound on the truncated BCH map.

\begin{lemma}[Growth of BCH coefficients]
Let $\mathrm{BCH}_m$ denote the BCH polynomial truncated at degree $m$. Then there exists a constant $C_m > 0$ such that for all $x,y \in T^{(m)}(\R^d)$,
\[
\|\mathrm{BCH}_m(x,y)\|_w \le C_m (1 + \|x\|_w + \|y\|_w)^m.
\]
\end{lemma}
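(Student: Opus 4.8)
The plan is to exploit that $\mathrm{BCH}_m$ is, by construction, a polynomial map of total degree at most $m$ whose homogeneous pieces are built from the graded components of $x$ and $y$ by tensor multiplication and Lie bracketing. First I would decompose $\mathrm{BCH}_m(x,y) = \sum_{\ell=1}^m B_\ell(x,y)$, where $B_\ell$ gathers all terms of homogeneous grading $\ell$ (the BCH series has no grading-$0$ part, so $\ell$ starts at $1$). Each $B_\ell(x,y)$ is a finite $\R$-linear combination of iterated brackets of components $x^{(i)}, y^{(j)}$ indexed by compositions $\ell = a_1 + \cdots + a_r$ with all $a_p \ge 1$ and a choice of $x$ or $y$ at each slot. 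The crucial structural observation is that the collection of such terms, together with their rational BCH coefficients, depends only on $m$ and not on the dimension $d$, since the BCH brackets are universal elements of the free Lie algebra.

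Second, I would bound a single monomial. Because the Euclidean (Hilbert--Schmidt) cross norm satisfies $\norm{a \otimes b}_{V^{\otimes(i+j)}} = \norm{a}_{V^{\otimes i}}\norm{b}_{V^{\otimes j}}$ and hence $\norm{[a,b]} \le 2\norm{a}\norm{b}$, any monomial of grading $\ell$ assembled from $r$ components satisfies
\[
\norm{\text{monomial}}_{V^{\otimes \ell}} \le 2^{r-1}\prod_{p=1}^r \norm{(\,\cdot\,)^{(a_p)}},
\]
where each factor is either $\norm{x^{(a_p)}}$ or $\norm{y^{(a_p)}}$. Invoking the weighted norm, $\norm{x^{(i)}} \le \norm{x}_w / w_i \le \norm{x}_w / w_{\min}$ with $w_{\min} = \min_{1\le k\le m} w_k$, and similarly for $y$, so each factor is at most $t := (\norm{x}_w + \norm{y}_w)/w_{\min}$. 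Since $1 \le r \le \ell \le m$, the product of $r$ such factors is at most $\max(1, t^r) \le (1+t)^m$; after absorbing the fixed power $w_{\min}^{-m}$ into the constant this gives a bound of the form $C(1 + \norm{x}_w + \norm{y}_w)^m$ for every monomial.

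Third, I would reassemble: multiply the per-monomial bound by $w_\ell \le w_{\max}$, sum over the finitely many monomials in $B_\ell$ (a number depending only on $m$) with their bounded coefficients, and finally take the maximum over $\ell \in \{1,\dots,m\}$ as demanded by $\norm{\cdot}_w$. All accumulated factors --- $w_{\max}$, $w_{\min}^{-m}$, the number of BCH terms, the bracket constants $2^{r-1}$, and the coefficient magnitudes --- depend only on $m$ and the fixed weight sequence, yielding $\norm{\mathrm{BCH}_m(x,y)}_w \le C_m(1+\norm{x}_w+\norm{y}_w)^m$.

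The main obstacle I anticipate is not any single estimate but the bookkeeping that guarantees a \emph{uniform} exponent $m$ together with a \emph{dimension-free} constant. The subtlety is that the number of component factors $r$ in a monomial varies between $1$ and $\ell$, so one must pass from the natural bound $t^r$ to the uniform $(1+t)^m$ without letting the constant acquire $d$-dependence; this is precisely where the elementary device $\max(1,t^r)\le(1+t)^m$ and the universality of the BCH bracket terms are essential. A secondary point worth checking carefully is the exactness of the Hilbert--Schmidt cross norm, which keeps the tensor-multiplication constant equal to $1$ and prevents an uncontrolled factor of $d$ from entering through repeated products.
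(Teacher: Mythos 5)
Your proof is correct and follows essentially the same route as the paper's: expand the truncated BCH series into its finitely many universal bracket monomials, bound each one multilinearly, and reassemble under the weighted max norm. One refinement worth noting: where the paper's sketch invokes equivalence of norms on homogeneous components (whose constants depend on the ambient dimension $d$) together with approximate submultiplicativity of $\|\cdot\|_w$, you instead use the exact Hilbert--Schmidt cross-norm identity $\|a\otimes b\| = \|a\|\,\|b\|$, which makes your constant $C_m$ explicitly dimension-free --- a sharper conclusion than the lemma as stated requires, and one that is more consistent with the dimension-free claims elsewhere in the paper.
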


\begin{proof}
Each term of the BCH expansion is a nested Lie bracket of degree at most $m$ and is multilinear in its arguments. Since the number of such terms is finite and depends only on $m$, and all norms are equivalent on each homogeneous component, the bound follows from expanding and using the submultiplicativity of $\|\cdot\|_w$ up to constants.
\end{proof}

\subsection{Small-ball estimates for Gaussian polynomials}
We recall the Carbery--Wright inequality in the special case relevant to our setting.

\begin{proposition}[Small-ball probability]
Let $P$ be a non-zero polynomial of degree $k$ in a centred Gaussian vector. Then
\[
\mathbb{P}(|P| \le \varepsilon \|P\|_{L^2}) \le C_k \, \varepsilon^{1/k},
\]
where $C_k$ depends only on $k$.
\end{proposition}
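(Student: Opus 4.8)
The plan is to derive the bound from two ingredients: a dimension-free ``master'' anti-concentration inequality valid for all log-concave measures, and a reverse Hölder estimate supplied by hypercontractivity. After normalizing so that $\|P\|_{L^2}=1$, it suffices to prove $\mathbb{P}(|P|\le\varepsilon)\le C_k\varepsilon^{1/k}$ for all $\varepsilon>0$, since the general statement follows by rescaling $P$.

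First I would establish the reverse-Hölder step $\mathbb{E}|P|^{1/k}\ge c_k$. As a polynomial of degree $\le k$ in the Gaussian vector, $P$ lies in $\bigoplus_{j=0}^{k}\mathcal{H}_j$, so the hypercontractive moment bound (Lemma~\ref{lem:hypercontractive}) gives $\|P\|_{L^p}\le (p-1)^{k/2}\|P\|_{L^2}$ for every $p\ge 2$. Using the log-convexity of $p\mapsto\log\|P\|_{L^p}$ to interpolate $\|P\|_{L^2}\le\|P\|_{L^{1/k}}^{\theta}\|P\|_{L^q}^{1-\theta}$ for a suitable $q>2$ and $\theta\in(0,1)$, and then absorbing $\|P\|_{L^q}$ through hypercontractivity, yields the reverse inequality $\|P\|_{L^2}\le C_k\|P\|_{L^{1/k}}$. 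Hence $\mathbb{E}|P|^{1/k}=\|P\|_{L^{1/k}}^{1/k}\ge c_k\|P\|_{L^2}^{1/k}=c_k$. This step is routine given the chaos decomposition and contributes only a $k$-dependent constant.

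Second, and this is the heart of the matter, I would prove the master inequality $\mathbb{P}(|P|\le\varepsilon)\cdot\mathbb{E}|P|^{1/k}\le C_k\varepsilon^{1/k}$ with $C_k$ independent of the number of Gaussian coordinates. The mechanism is slicing: the standard Gaussian measure $\gamma_n$ is log-concave, and for any unit direction $\theta$ the conditional law of $\langle x,\theta\rangle$ given the orthogonal projection is a one-dimensional log-concave measure. Writing $\mathbb{P}(|P|\le\varepsilon)$ as the average of the conditional one-dimensional probability, the restriction $t\mapsto P(x^{\perp}+t\theta)$ is a real polynomial of degree $\le k$, and one invokes the one-dimensional estimate: for a log-concave density $g$ on $\mathbb{R}$ and a degree-$k$ polynomial $p$, the sublevel set $\{|p|\le\varepsilon\}$ is a union of at most $k$ intervals, and a Remez-type comparison bounds $\int_{\{|p|\le\varepsilon\}}g\,dt$ by $Ck\,(\varepsilon/\|p\|)^{1/k}$ at the appropriate scale $\|p\|$. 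Integrating back over the orthogonal complement and over a generic direction $\theta$ (needed to control the leading coefficient of the restricted polynomial), while using that marginals of log-concave measures are again log-concave, assembles the dimension-free bound. Combined with $\mathbb{E}|P|^{1/k}\ge c_k$ from the previous paragraph, this gives the claim.

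The main obstacle is precisely this dimension-free one-dimensional core. The delicate points are the Remez-type estimate, which converts ``degree $k$'' into the exponent $1/k$ and must hold uniformly over polynomials, and the requirement that the constant depend only on $k$ and not on $n$; the latter is where log-concavity is indispensable, since both the conditionals and the marginals of $\gamma_n$ along a line remain one-dimensional log-concave objects with no dimensional loss. I note that one may alternatively quote Carbery--Wright \cite{carbery2001distributional} as a black box, but the slicing argument above is the route I would take to make the dependence on $k$ transparent and to keep the proof self-contained.
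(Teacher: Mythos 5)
The paper does not actually prove this proposition: its ``proof'' is a one-line citation of Carbery--Wright \cite{carbery2001distributional}, together with a remark that the constant is $O(k)$. So the fallback you mention in your final sentence---quoting the inequality as a black box---is exactly what the paper does. Your attempted self-contained derivation is, architecturally, faithful to Carbery and Wright's own argument, which likewise splits into a reverse-H\"{o}lder estimate and a ``master'' anti-concentration inequality for log-concave measures. Your first step is correct and routine: log-convexity of $p\mapsto\|P\|_{L^p}$ interpolates $\|P\|_{L^2}$ between $\|P\|_{L^{1/k}}$ and $\|P\|_{L^q}$, and hypercontractivity absorbs $\|P\|_{L^q}$, yielding $\|P\|_{L^2}\le C_k\|P\|_{L^{1/k}}$ with $C_k$ depending only on $k$.

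As a self-contained proof, however, your second step has a genuine gap. The one-dimensional mechanism you invoke---the sublevel set $\{|p|\le\varepsilon\}$ of a degree-$k$ real polynomial is a union of at most $k$ intervals, each of length of order $(\varepsilon/|a_k|)^{1/k}$ by factoring over the roots---controls the sublevel set in terms of the \emph{leading coefficient} $a_k$ of the restricted polynomial $t\mapsto P(x^{\perp}+t\theta)$. That coefficient depends on the slice $x^{\perp}$ and the direction $\theta$; it can vanish identically (for instance when $P$ has degree strictly less than $k$ along $\theta$) or be arbitrarily small on slices carrying most of the Gaussian mass. Your phrases ``at the appropriate scale $\|p\|$'' and ``a generic direction $\theta$'' mark exactly where the difficulty lives, not where it is resolved: one must replace the leading coefficient by a quantity on each line (such as a fractional moment of $p$ along that line) that both dominates the sublevel-set estimate and can be integrated back over slices with no dimensional loss, and then close the resulting self-referential inequality. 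That step is the actual content of the Carbery--Wright theorem and requires their inductive, integral-geometric argument; log-concavity of conditionals and marginals alone does not supply it. So either cite the inequality, as the paper does, or budget for substantial further work in your second step---the outline as written does not yet compile into a proof.
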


\begin{proof}
This is the statement of the Carbery--Wright inequality \cite{carbery2001distributional}. The original constant $C_k$ was shown to be $O(k)$; optimal constants are studied in \cite{versalic2020optimal}.
\end{proof}

\medskip
The results contained in this appendix are used at various stages of the proofs but do not rely on the specific structure of the signature or log-signature. They are included here for reference and completeness.

% ============================================
% APPENDIX B: EXAMPLES
% ============================================
\section{Appendix B: Examples and Special Cases}

This appendix provides several examples and explicit computations that illustrate the behaviour of signature and log-signature concentration in concrete Gaussian settings. The aim is to complement the theoretical results with explicit formulas and asymptotic estimates.

\subsection{Brownian motion}
Let $B = (B_t)_{t \in [0,1]}$ be a $d$-dimensional Brownian motion. Its covariance is $R(s,t) = (s \wedge t) I_d$. Brownian motion has finite $\rho$-variation for any $\rho > 1$, and thus admits a canonical geometric rough path lift of any order $m \ge 2$.

\subsubsection{First and second levels}
The first-level signature is simply $B_{0,1}$, a Gaussian vector with $\|B_{0,1}\|_{L^2} = \sqrt{T}$. The second-level signature is given by the L\'evy area
\[
A^{i,j} = \frac{1}{2}\left( \int_0^1 B_t^i \, dB_t^j - B_t^j \, dB_t^i \right).
\]
Each $A^{i,j}$ belongs to the second Wiener chaos and satisfies $\|A^{i,j}\|_{L^2} = T/2$. By Theorem \ref{thm:coordinate-concentration},
\[
\mathbb{P}(|A^{i,j}| \ge t) \le C \exp(-c t).
\]
In accordance with the theorem, the exponent is $2/2 = 1$.

\subsubsection{Higher-order terms}
For a multi-index $I$ of length $k$, the coordinate $S_I(B)$ belongs to the chaos $\bigoplus_{j=1}^k \mathcal{H}_j$, with leading component in $\mathcal{H}_k$. One can compute
\[
\|S_I(B)\|_{L^2}^2 = \frac{T^k}{k!} \quad \text{for distinct indices}.
\]
As a consequence,
\[
\mathbb{P}(|S_I(B)| \ge t) \le C_k \exp\left(-c_k (t \sqrt{k!/T^k})^{2/k}\right).
\]
This illustrates that even for Brownian motion, signature coordinates display increasingly heavy tails as $k$ increases.

\subsection{Fractional Brownian motion}
Let $B^H$ be fractional Brownian motion with Hurst exponent $H \in (1/4,1)$. The covariance is
\[
R_H(s,t) = \tfrac12(s^{2H} + t^{2H} - |t-s|^{2H}).
\]
Fractional Brownian motion admits a rough path lift when $H > 1/4$.

The $L^2$-norm of a signature coordinate of order $k$ satisfies
\[
\|S_I(B^H)\|_{L^2} \asymp C_{H,k} \, T^{kH},
\]
where the constant depends only on $H$ and $k$. The tail estimate becomes
\[
\mathbb{P}(|S_I(B^H)| \ge t) \le C_{H,k} \exp(-c_{H,k} (t/T^{kH})^{2/k}).
\]
In particular, the exponent $2/k$ is universal across Gaussian rough paths, while the scale depends strongly on the Hurst parameter.

\subsection{Ornstein--Uhlenbeck process}
Let $X$ be the stationary OU process in $\mathbb{R}^d$ defined by
\[
dX_t = -\theta X_t \, dt + dB_t, \qquad \theta > 0.
\]
The covariance is
\[
R(s,t) = \frac{1}{2\theta} e^{-\theta|t-s|} (1 - e^{-2\theta(s \wedge t)}).
\]
Since OU is Gaussian with smooth covariance, it lifts to a rough path of any order.

The structure of its signature is similar to that of Brownian motion, but with explicit dependence on $\theta$. For instance, the second-level area satisfies
\[
\mathbb{E}[|A^{i,j}|^2] = \frac{1}{2\theta^2}(1 - e^{-2\theta}) - \frac{1}{\theta}(1 - e^{-\theta})^2.
\]
The concentration rate remains exponential as predicted by Theorem~\ref{thm:coordinate-concentration}.

\subsection{Asymptotics for large truncation order}
For fixed Gaussian process $X$, consider the growth of the constants in the concentration inequality as $k$ (the degree of the chaos) increases. Hypercontractivity implies
\[
\|S_I(X)\|_{L^p} \le (p-1)^{k/2} \|S_I(X)\|_{L^2},
\]
which is sharp for large $p$. Hence the sub-exponential tail
\[
\exp(-c t^{2/k})
\]
becomes progressively heavier as $k \to \infty$. This behaviour is intrinsic and cannot be improved.

\subsection{Illustration: tail decay comparison}
Figure \ref{fig:tails} illustrates the tail decay $\exp(-t^{2/k})$ for different values of $k$, showing how higher-order signature terms exhibit heavier tails.

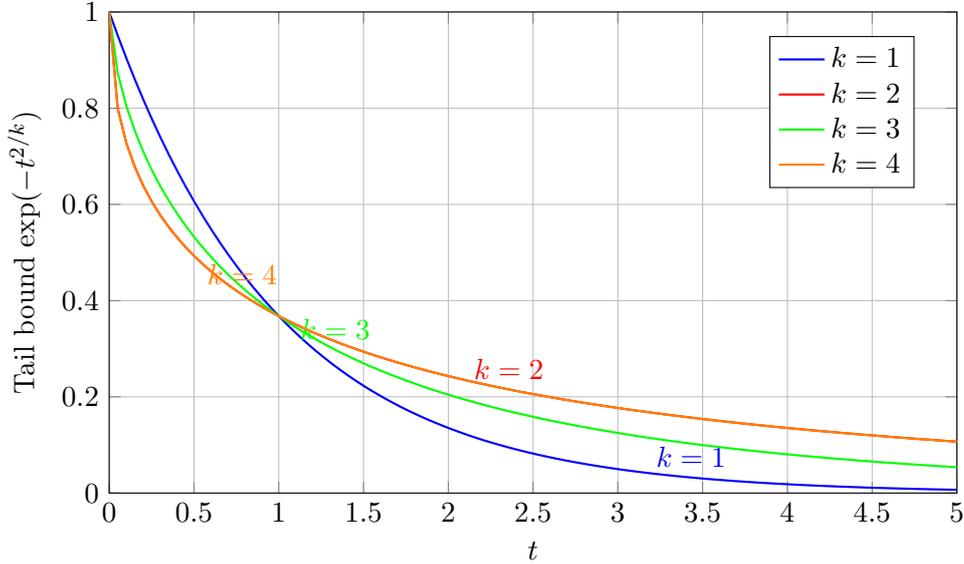
\begin{figure}[ht]
\centering
\begin{tikzpicture}
\begin{axis}[
    width=0.8\textwidth,
    height=0.5\textwidth,
    xlabel={$t$},
    ylabel={Tail bound $\exp(-t^{2/k})$},
    xmin=0, xmax=5,
    ymin=0, ymax=1,
    legend style={at={(0.95,0.95)}, anchor=north east},
    grid=both
]
\addplot[blue, thick, domain=0:5, samples=100] {exp(-x)} node[pos=0.7, above] {$k=1$};
\addplot[red, thick, domain=0:5, samples=100] {exp(-sqrt(x))} node[pos=0.5, above] {$k=2$};
\addplot[green, thick, domain=0:5, samples=100] {exp(-x^(2/3))} node[pos=0.3, above] {$k=3$};
\addplot[orange, thick, domain=0:5, samples=100] {exp(-x^(0.5))} node[pos=0.2, above] {$k=4$};
\legend{$k=1$, $k=2$, $k=3$, $k=4$}
\end{axis}
\end{tikzpicture}
\caption{Comparison of tail bounds $\exp(-t^{2/k})$ for different chaos orders $k$. Higher $k$ yields heavier tails.}
\label{fig:tails}
\end{figure}

\subsection{The scalar Gaussian process case}
Consider a scalar Gaussian process $X$; then all higher-order signature terms correspond to multiple Wiener integrals of the form
\[
I_k = \int_{0 < t_1 < \dots < t_k < 1} dX_{t_1} \cdots dX_{t_k}.
\]
These satisfy
\[
\mathbb{E}[I_k^2] = \frac{1}{k!} \int_{[0,1]^{2k}} \prod_{j=1}^k R(t_j,s_j) \, d\mathbf{t} \, d\mathbf{s}.
\]
The concentration inequality becomes
\[
\mathbb{P}(|I_k| \ge t) \le C_k \, \exp(-c_k t^{2/k}).
\]
This explicit representation highlights the combinatorial nature of higher-order terms.

\medskip
These examples illustrate the breadth of applicability of the concentration results derived in the main text. They also show that the exponent $2/k$ is universal for all Gaussian rough paths, while the prefactors and scaling depend on the specific covariance structure.

% ============================================
% BIBLIOGRAPHY
% ============================================


\begin{thebibliography}{99}

% Rough paths foundational works
\bibitem{Lyons1998}
T.~J. Lyons.
\newblock Differential equations driven by rough signals.
\newblock \emph{Rev. Mat. Iberoam.}, 14(2):215--310, 1998.

\bibitem{LyonsQian2002}
T.~J. Lyons and Z.~Qian.
\newblock \emph{System Control and Rough Paths}.
\newblock Oxford University Press, 2002.

\bibitem{friz2014course}
P.~K. Friz and M.~Hairer.
\newblock \emph{A Course on Rough Paths}.
\newblock Springer, 2014.

% Gaussian processes and concentration
\bibitem{ledoux2001concentration}
M.~Ledoux.
\newblock \emph{The Concentration of Measure Phenomenon}.
\newblock AMS, 2001.

\bibitem{boucheron2013concentration}
S.~Boucheron, G.~Lugosi, and P.~Massart.
\newblock \emph{Concentration Inequalities: A Nonasymptotic Theory of Independence}.
\newblock Oxford University Press, 2013.

\bibitem{janson1997gaussian}
S.~Janson.
\newblock \emph{Gaussian Hilbert Spaces}.
\newblock Cambridge University Press, 1997.

% Signatures and log-signatures
\bibitem{ChevyrevOberhauser2018}
I.~Chevyrev and H.~Oberhauser.
\newblock Signature moments to characterize laws of stochastic processes.
\newblock \emph{Ann. Probab.}, 46(1):378--403, 2018.

\bibitem{HamblyLyons2010}
B.~Hambly and T.~J. Lyons.
\newblock Uniqueness for the signature of a path of bounded variation.
\newblock \emph{Proc. Amer. Math. Soc.}, 138(7):2475--2483, 2010.

% BCH and free Lie algebras
\bibitem{Reutenauer1993}
C.~Reutenauer.
\newblock \emph{Free Lie Algebras}.
\newblock Oxford University Press, 1993.

% Fractional Brownian motion and Gaussian rough paths
\bibitem{CoutinQian2002}
L.~Coutin and Z.~Qian.
\newblock Stochastic analysis, rough path analysis and fractional Brownian motions.
\newblock \emph{Probab. Theory Related Fields}, 122(1):108--140, 2002.

% Hypercontractivity and Wiener chaos
\bibitem{Nelson1973}
E.~Nelson.
\newblock The free Markoff field.
\newblock \emph{J. Funct. Anal.}, 12:211--227, 1973.

% Machine learning with signatures
\bibitem{KidgerLyons2020}
P.~Kidger and T.~J. Lyons.
\newblock Signatory: differentiable computations of the signature and logsignature transforms.
\newblock \emph{Adv. Neural Inf. Process. Syst.}, 33:2180--2192, 2020.

\bibitem{chevyrev2016primer}
I.~Chevyrev and A.~Kormilitzin.
\newblock A primer on the signature method in machine learning.
\newblock arXiv:1603.03788, 2016.

\bibitem{friz2010multidimensional}
P.~K. Friz and N.~Victoir.
\newblock \emph{Multidimensional stochastic processes as rough paths}.
\newblock Cambridge University Press, 2010.

\bibitem{boedihardjo2015signature}
H.~Boedihardjo, X.~Geng, T.~Lyons, and D.~Yang.
\newblock The signature of a rough path: uniqueness.
\newblock \emph{Advances in Mathematics}, 293:720--737, 2016.

% Carbery-Wright and small-ball estimates
\bibitem{carbery2001distributional}
A.~Carbery and J.~Wright.
\newblock Distributional and $L^q$ norm inequalities for polynomials over convex bodies in $\mathbb{R}^n$.
\newblock \emph{Math. Res. Lett.}, 8(3):233--248, 2001.

\bibitem{versalic2020optimal}
A.~Vershynina.
\newblock Optimal constants in the Carbery--Wright inequality.
\newblock arXiv:2009.10592, 2020.

\end{thebibliography}
\end{document}